\DeclareMathOperator{\Ker}{Ker}
 \DeclareMathOperator{\im}{Im}
\newcommand{\Z}{\mathbb{Z}}
\newcommand{\R}{\mathbb{R}}
\newcommand{\RP}{\mathbb{R}P}
\newcommand{\dt}{\mathbb{Z}_2}
\newcounter{stmcounter}[section]
\newcounter{problcounter}
\numberwithin{equation}{section}
\theoremstyle{plain}
\newtheorem{cor}[stmcounter]{Corollary}
\newtheorem{thm}[stmcounter]{Theorem}
\newtheorem{thmNo}{Theorem}
\newtheorem{prop}[stmcounter]{Proposition}
\newtheorem{lem}[stmcounter]{Lemma}
\newtheorem{probl}[problcounter]{Problem}
\theoremstyle{definition}
\newtheorem{defin}[stmcounter]{Definition}
\theoremstyle{remark}
\newtheorem{ex}[stmcounter]{Example}
\newtheorem{rem}[stmcounter]{Remark}
\newtheorem{con}[stmcounter]{Construction}
\begin{document}
\title{Three-Dimensional Small Covers and Links}
\author{Vladimir Gorchakov}

\address{Department of Mathematics, University of Western Ontario}
\email{vyugorchakov@gmail.com}

\subjclass[2020]{ 57S12, 57S17, 57S25, 57M60,  52B10}

\begin{abstract}
We study certain orientation-preserving involutions on three-dimensional small covers. We prove that the quotient space of an orientable three-dimensional small cover by such an involution in  $\dt^3$ is homeomorphic to a connected sum of copies of $S^2 \times S^1$. If this quotient space is a 3-sphere, then the corresponding small cover is a two-fold branched covering of the 3-sphere along a link. We provide a description of this link in terms of the polytope and the characteristic function.

\end{abstract}

\maketitle

\section{Introduction}

In \cite{DJ}, M.\,W.\,Davis and T.\,Januszkiewicz introduced a class of $n$-dimensional closed manifolds called \emph{small
covers over simple $n$-polytopes}. 
A small cover can be defined using the following combinatorial data: a simple $n$-polytope $P$ and a map $\lambda$ from the set of facets $\mathcal{F} = \{F_1, \dots, F_m\}$ of $P$ to $\dt^n$ such that for any face $F$, that is an intersection of facets $F_i$, the corresponding vectors $\lambda_i = \lambda(F_i)$ are linearly independent. In particular, small covers have a natural $\dt^n$-action.  There is a close connection between topological and geometric properties of a small cover $X = X(P, \lambda)$ and combinatorial properties of a simple polytope $P$.

A closely related coloring construction arises in the theory of hyperbolic $3$-manifolds.
In the hyperbolic setting, A.\,Yu.\,Vesnin introduced in~\cite{Vesn1} a method for
constructing closed hyperbolic $3$-manifolds via $\dt^3$-colorings of facets of compact
right-angled hyperbolic polyhedra; see also~\cite{Vesn} for a historical overview.

In this paper, we focus on $3$-dimensional small covers. Note that by the Four Color Theorem every simple $3$-polytope admits a small cover. In \cite{BEMPP}, V.\,M.\,Buchstaber et al.  proved the cohomological rigidity  for $3$-dimensional small covers over Pogorelov polytopes.  In \cite{Er2}, N.\,Yu.\,Erokhovets constructed an explicit geometric decomposition of orientable $3$-dimensional small covers. In \cite{WuYu}, L.\,Wu and L.\,Yu obtained a criterion when a $3$-dimensional small cover is a Haken manifold. In \cite{Gruj}, V.\,Grujić obtained an explicit presentation of the fundamental group of an orientable $3$-dimensional small cover with the minimal number of generators. 

In a series of articles \cite{Med}, \cite{VesMed1}, \cite{VesnMed}, A.\,D.\,Mednykh and A.\,Yu.\,Vesnin studied $3$-manifolds with an involution such that the corresponding orbit space is a $3$-sphere. They called  such manifolds \emph{hyperelliptic}. In this paper, we study involutions on orientable three-dimensional small covers, in analogy with the hyperelliptic manifolds considered by Mednykh and Vesnin. Note that related work was recently  done independently by N.\,Yu.\,Erokhovets in \cite{Er}. In particular, some results of this paper are proved in \cite{Er}. However, our main results and approaches are different from \cite{Er}.

In our first main result, we describe the orbit space of a $3$-dimensional orientable small cover by an orientation-preserving involution $g \in \dt^3$.
\begin{thmNo} \label{Main1I}
    Let $X$ be an orientable $3$-dimensional  small cover, and let $g \in \dt^3$ be an orientation-preserving involution. Then the orbit space $X/g$ is homeomorphic to $\#_{k-1} S^2 \times~S^1$ for some $k$.
\end{thmNo}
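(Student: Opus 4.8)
\emph{Plan of proof.} Write $\rho\colon X\to P$ for the orbit map of the $\dt^3$--action ($X=P\times\dt^3/\!\sim$), set $Y:=X/g$, and let $\bar\rho\colon Y\to P$ be the induced map. The first step is to show that the set
\[
  \mathcal M \;:=\; \{\, F_i\cap F_j \ :\ F_i\cap F_j \text{ is an edge of }P,\ \lambda_i+\lambda_j=g \,\}
\]
is a \emph{perfect matching} on the vertices of $P$. At a vertex $v=F_i\cap F_j\cap F_k$ the vectors $\lambda_i,\lambda_j,\lambda_k$ form a basis of $\dt^3$, and in the standard chart $\R^3$ of $X$ around the unique point over $v$ the element $g$ acts by reversing the signs of exactly the coordinates whose basis vector appears in the expansion of $g$ in this basis; since $g$ preserves orientation and $g\neq0$, this expansion uses exactly two of the three vectors, so exactly one edge at $v$ lies in $\mathcal M$. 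From $X=P\times\dt^3/\!\sim$ one reads off that $\mathrm{Fix}(g)=\bigsqcup_{e\in\mathcal M}\rho^{-1}(e)$, a disjoint union of circles; hence $Y$ is a closed orientable $3$--manifold and $X\to Y$ is a two--fold branched covering along the link $L\subset Y$ which is the image of $\mathrm{Fix}(g)$, with one component for each edge of $\mathcal M$.

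Next I would produce a handle decomposition of $Y$. Choose a generic linear functional $\varphi$ on $P$; it has a single minimum and maximum and $h_1=h_2=m-3$ vertices of index $1$ and index $2$. Pulling $\varphi$ back along $\bar\rho$ gives a piecewise--linear Morse function on $Y$: over the relative interior of a positive--dimensional face the map $Y\to P$ is sheet--by--sheet a trivial submersion and has no critical points, while a local computation at the point of $Y$ over a vertex $v$ — in which the unique edge at $v$ lying in $\mathcal M$ is treated separately and gives a fold of type $w\mapsto|w|$ — shows this point is a nondegenerate critical point of index $\mathrm{ind}_\varphi(v)$. Hence $Y$ has a handle decomposition with one $0$--handle, $m-3$ one--handles, $m-3$ two--handles and one $3$--handle, so a genus--$(m-3)$ Heegaard splitting with surface $\Sigma=\bar\rho^{-1}(\varphi^{-1}(c))$, and a presentation of $\pi_1(Y)$ on $m-3$ generators and $m-3$ relators (the attaching curves of the $2$--handles, read on $\Sigma$) entirely determined by $P$ and $\mathcal M$.

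Finally, by the classification of closed orientable $3$--manifolds with free fundamental group — prime decomposition, the Sphere Theorem and the Poincaré Conjecture together show these are precisely the connected sums of copies of $S^2\times S^1$ (with $S^3$ allowed) — it is enough to prove $\pi_1(Y)$ is free; then $Y\cong\#_{k-1}(S^2\times S^1)$ with $k-1=b_1(Y)$. The mechanism I would aim for is that each edge of $\mathcal M$ whose two endpoints have $\varphi$--indices $1$ and $2$ produces, near its branch circle, a cancelling $1$--handle/$2$--handle pair; after cancelling all of these one should be left with the standard genus--$(k-1)$ diagram of $\#_{k-1}(S^2\times S^1)$. I expect the main obstacle to be exactly this: controlling the $2$--handle attaching curves on $\Sigma$ and showing that the matching $\mathcal M$ forces enough of them to cancel — equivalently, that $\pi_1(Y)$ is free — which comes down to a careful but combinatorial study of the vertex links of $P$ relative to $\varphi$ and $\mathcal M$.
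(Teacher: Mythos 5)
Your first paragraph is correct and matches the paper's Construction \ref{constr}: exactly one edge at each vertex is labelled $g$, these edges form a perfect matching $\mathcal M$, $\mathrm{Fix}(g)$ is the union of the circles $p^{-1}(e)$ over $e\in\mathcal M$, and $X\to X/g$ is a two-fold branched covering of closed orientable $3$-manifolds. From there, however, you set out on a route that is genuinely different from the paper's, and it has a real gap.

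You reduce the theorem to the claim that $\pi_1(Y)$ is free (which, via Kneser--Milnor, the Sphere Theorem and Poincar\'e, does classify $Y$ as $\#_{k-1}S^2\times S^1$), but the proposed mechanism — build a PL Morse function on $Y$ by pulling back a generic linear functional on $P$, then cancel $1$-/$2$-handle pairs along the matched edges — is stated as a hope, not carried out, and it is not straightforward. Two concrete obstacles: (i) the critical-point analysis over a vertex is not as clean as asserted. If $g$ flips the two coordinates $x_i,x_j$ in the chart at $v=F_i\cap F_j\cap F_k$, then the descending link of the induced function on $\R^3/\langle g\rangle$ is $\{f<0\}/g$, and when the unique negative direction is one of the flipped coordinates this is a single point rather than $S^0$, so the index attribution $\mathrm{ind}_\varphi(v)$ fails and you do not get the advertised handle decomposition for free. (ii) Even granting a genus-$(m-3)$ Heegaard splitting, controlling the attaching curves well enough to see handle cancellations reduce it to the standard genus-$(k-1)$ diagram is exactly the hard combinatorial content, and you explicitly leave it open.

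The paper avoids all of this by inserting the intermediate quotient $X/G$, where $G\subset\dt^3$ is the orientation-preserving subgroup of index two. Theorem \ref{OrbG} shows $X/G\cong S^n$ by exhibiting it as two copies of $P$ glued along $\partial P$; in dimension three this gives a sphere $S^3$ in which $\partial P\cong S^2$ sits as an equator (Corollary \ref{Gluing}). The branch locus of the two-fold branched covering $X/g\to X/G$ is the image of the unmatched edges, a disjoint union of cycles in $\Gamma\subset\partial P$, so it is a collection of disjoint circles in a $2$-sphere inside $S^3$ — automatically the trivial $k$-component link. The double branched covering of $S^3$ over the trivial $k$-component link is $\#_{k-1}S^2\times S^1$ (Theorem \ref{trivial}), and that finishes the proof. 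If you want to salvage your approach, the missing idea is precisely this intermediate $S^3$: the unmatched cycles lie on $\partial P$ and bound disks there, which is what ultimately forces the handle cancellations you are trying to engineer by hand.
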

The number $k$ will be defined later.
The idea is to study the following sequence of quotient maps: 
\begin{equation*} 
    X \to X/g  \to X/G \to X/\dt^3,
\end{equation*}
where $G$ is the orientation-preserving subgroup of $\dt^3$. In this case, the orbit space $X/G$ is homeomorphic to two copies of $P$ glued along the boundary and hence homeomorphic to $S^3$ as follows from \cite[Thm. 5.9]{Gor}. Hence, we can write this sequence as follows:
\begin{equation*} 
    X \to X/g  \to S^3 \to P.
\end{equation*}
It follows from \cite[Thm.\,1.7]{SmallCovers} that in the case of orientable $3$-dimensional small covers there are only two possibilities for the image of the characteristic function $\lambda$: $\im \lambda = \{ \lambda_1, \lambda_2, \lambda_3 \}$ or  $\im \lambda = \{ \lambda_1, \lambda_2, \lambda_3, \lambda_1 + \lambda_2 + \lambda_3 \}$. Moreover, in this case $G = \{0, \lambda_1 + \lambda_2, \lambda_1 + \lambda_3, \lambda_2 + \lambda_3\}$. Therefore, for every edge $I = F_i \cap F_j$, that is the intersection of facets $F_i$ and $F_j$, we can assign the element $\lambda(F_i) + \lambda(F_j) \in G$. This induces a labeling of edges of $P$ by elements of $G$, see Construction \ref{constr} for details.  In Theorem \ref{Main1}, we show  that the branching set of the $2$-fold branched covering $X/g \to S^3$ is the trivial link with $k$ components,  corresponding to all edges of $P$ that are \emph{not labeled} by $g$. Since a $2$-fold branched covering is determined by its branching link, we have that  $X/g \cong \#_{k-1} S^2 \times S^1$.

In particular, for $k = 1$,  the orbit space $X/g \cong S^3$ is a $2$-fold branched covering of the $3$-sphere $X/G \cong S^3$ along the trivial knot. This trivial knot corresponds to a Hamiltonian cycle in the simple polytope $P$, which we denote by $C$. Moreover, we show that there exists $g \in G$ such that $X/g \cong S^3$ if and only if the corresponding characteristic function $\lambda$ is induced by a Hamiltonian cycle. This result  was independently obtained by a different method in \cite{Er}. Note that this result provides a topological interpretation of $4$-colorings that are induced by  Hamiltonian cycles. In this case, we call $\lambda$ a \emph{Hamiltonian} characteristic function. The corresponding small covers are  hyperelliptic manifolds in the sense of \cite{Med} and \cite{Er}. 

A small cover $X = X(P,\lambda)$ with a Hamiltonian characteristic function $\lambda$ is a $2$-fold branched covering of $X/g \cong S^3$ along a link $L$. In this case, the Hamiltonian characteristic function determines an orientation-preserving involution $g \in \dt^3$. The link $L$ arises as the preimage under the map $X \to X/g$ of the edges of $P$ that are not contained in the Hamiltonian cycle $C$. Our second main result provides a combinatorial description of this link $L$ in terms of the pair $(P,\lambda)$. 

From a Hamiltonian cycle $C$ on a simple $3$-polytope $P$, the edges of $P$ that are not contained in $C$ are naturally separated into two disjoint subsets by the two sides of $C$ in $\partial P$. This yields a bipartite diagram $D_C$ associated with the Hamiltonian cycle $C$, whose vertex set is the set of vertices of $C$ equipped with the cyclic order induced by the cycle $C$, and whose arcs correspond to edges of $P$ not contained in $C$. A precise definition of the diagram $D_C$ is given in Section~5. 

To visualize the diagram $D_C$, we pass to a linear representation. From the Hamiltonian cycle $C$, we obtain a Hamiltonian path $H$ by removing an arbitrary edge from $C$, which induces a linear order on the set of vertices of $P$. We represent such diagrams graphically by placing the vertices on a line (representing the Hamiltonian path $H$) in the induced order and drawing each edge that is not in $C$ as an arc joining two vertices on one side of the line. Edges from the two subsets are drawn at different levels, so that edges from one subset pass over edges from the other. We obtain the link diagram $L(D_C)$ by doubling each chord of $D_C$ across the line, that is, by adding its image under a $\pi$-rotation about the axis. As a result, each edge gives rise to a trivial knot component, and the over/under information is reversed on the two sides. See the examples below.  
 
\begin{thmNo} \label{Main2I}
Let $X = X(P,\lambda)$ be an orientable $3$-dimensional small cover with Hamiltonian characteristic function $\lambda$.
Let $g \in \dt^3$ be the orientation-preserving involution determined by $\lambda$, and let $D_C$ be the bipartite  chord diagram associated with the Hamiltonian cycle $C$. Then $X \to X/g$ is a $2$-fold branched covering whose branching set is the link $L(D_C)$.
\end{thmNo}

\begin{ex}
    Let $X = \RP^3$ be the small cover over the simplex $\Delta^3$, and let $g = \lambda_2 + \lambda_3$.  Then the branching set of the map $X/g \to X/G$ is an unknot that corresponds to the Hamiltonian cycle in Figure~$1$, that is all the edges not labeled by $\lambda_2 + \lambda_3$. The orbit space $X/g$ is homeomorphic to $S^3$. The corresponding link $L$ is the Hopf link. 

\end{ex}

\begin{figure}[ht]
\begin{minipage}[ht]{0.35\linewidth}  
\center{\scalebox{1.5}{\includegraphics{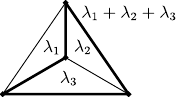}}}
\caption{The  Hamiltonian cycle in $\Delta^3$.}  
\end{minipage}
\hfill
\begin{minipage}[ht]{0.35\linewidth} 
\center{\scalebox{1.5}{\includegraphics{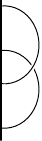}}}
\caption*{The bipartite  chord diagram $D_C$.} 
\end{minipage}
\hfill
\begin{minipage}[ht]{0.27\linewidth} 
\center{\scalebox{1.5}{\includegraphics{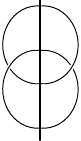}}}
\caption*{The Hopf link.} 
\end{minipage}
\end{figure}

\begin{ex}
Let $X = X(I^3, \lambda)$ be the small cover over cube $I^3$ with $\lambda$ as in Figures $2$ and $3$. For $g = \lambda_2 + \lambda_3$, the branching set of the map $X/g \to X/G$ is an unlink with two components that corresponds to two disjoint cycles in $I^3$ as shown in Figure~$2$. The orbit space $X/g$ is homeomorphic to $S^2 \times S^1$. 
\begin{figure}[ht]
\center{\scalebox{1.5}{\includegraphics{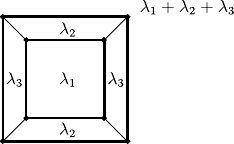}}}

\caption{Two cycles in $I^3$.}  

\end{figure}

For $g = \lambda_1 + \lambda_3$,  the branching set of the map $X/g \to X/G$ is an unknot that corresponds to the Hamiltonian cycle in Figure~$3$. The orbit space $X/g$ is homeomorphic to $S^3$. The corresponding bipartite  chord diagram and the link are shown below. The link $L$ is known as $L8n8$ in the Thistlethwaite link table. In particular, it is not alternating.

\end{ex}

\begin{figure}[ht]
\begin{minipage}[h]{0.35\linewidth}  
\center{\scalebox{1.5}{\includegraphics{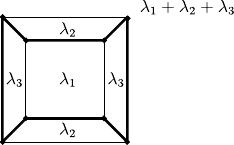}}}
\caption{The  Hamiltonian cycle in $I^3$.}  
\end{minipage}
\hfill
\begin{minipage}[h]{0.3\linewidth} 
\center{\scalebox{1.5}{\includegraphics{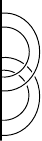}}}
\caption*{The bipartite  chord diagram $D_C$.} 
\end{minipage}
\hfill
\begin{minipage}[h]{0.28\linewidth} 
\center{\scalebox{1.5}{\includegraphics{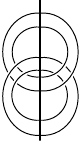}}}
\caption*{The link $L(D_C)$.} 
\end{minipage}
\end{figure}

\section{Preliminaries}

Now we recall some preliminaries about small covers. For further details see \cite{DJ}.
\begin{defin}
Let $P=P^n$ be a simple polytope of dimension $n$.
A \emph{small cover over} $P$ is a smooth manifold  $X=X^n$ with a locally standard smooth $\dt^n$-action such that the orbit space $X/\dt^n$ is homeomorphic to  $P$ as a manifold with corners. 
\end{defin}

 Let $\pi: X \to P$ be a small cover over $P$. For every face $F$ of $P$, denote its relative interior by  $F^{\circ}$.  For every $x,y \in~\pi^{-1}(F^{\circ})$ the stabilizer groups of $x$ and $y$ are the same. Denote this stabilizer group by $G_F$. In particular, if $F$ is a facet, then $G_F$  is a subgroup of rank one, hence $G_F = \langle \lambda(F)\rangle$ for $\lambda(F) \in \dt^n$. Hence, we get  a \textit{characteristic function} 
 $$\lambda : \mathcal{F} \to \dt^n,$$
 from the set $\mathcal{F}$ of all facets of $P$.
 We denote $\lambda(F_i)$ by $\lambda_i$.
For a codimension $k$ face $F$  we have that $F = F_1 \cap \dots \cap F_k$ for some facets $F_1, \dots, F_k \in \mathcal{F}$, then $G_F$ is a subgroup of rank equal to $k$ and generated by $\lambda_1, \dots, \lambda_k$. Therefore, we have the following $(*)$-condition
\\

$(*)$  Let $F = F_1 \cap \dots \cap  F_k$ be any codimension $k$ face of $P$. Then $\lambda_1, \dots, \lambda_k$ are linearly independent in $\dt^n$.
\\

Conversely, a simple polytope $P$ and a map $\lambda: \mathcal{F} \to \dt^n$  satisfying the $(*)$-condition determine a small cover $X(P,\lambda)$ over $P$ by the following construction:
\begin{equation} \label{DJCons}
    X(P, \lambda) = P \times \dt^n/\!\!\sim,
\end{equation}
where $(p,g) \sim (q, h)$ if and only if $p=q$ and $h-g \in G_{F(p)}$, where $F(p)$ is the unique face of $P$ that contains $p$ in its relative interior.  The $\dt^n$-action on $X(P, \lambda)$ is defined by the following formula:
$g (p,h) = (p,g+h)$.

\begin{thm}[{\cite[Prop.\,1.8]{DJ}}]\label{DJeqh}
    Let $X$ be a small cover over $P$ with characteristic function $\lambda: \mathcal{F} \to \dt^n$. Then $X$ and $X(P,\lambda)$ are $\dt^n$-equivariantly homeomorphic.
\end{thm}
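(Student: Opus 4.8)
The plan is to reduce the statement to the existence of a continuous section of the orbit map and then to write down an explicit equivariant homeomorphism built from such a section. So first I would assume given a continuous section $s\colon P\to X$ of the orbit projection $\pi\colon X\to P$, i.e. $\pi\circ s=\id_P$, and define
\[
  f\colon X(P,\lambda)\longrightarrow X,\qquad f\big([p,g]\big)=g\cdot s(p).
\]
To check this is well defined, observe that $p$ lies in the relative interior of the face $F(p)$, so $s(p)\in\pi^{-1}(F(p)^{\circ})$ and therefore $Stab(s(p))=G_{F(p)}$; since the groups attached to a face, both in $X$ and in the model $X(P,\lambda)$, are the subgroup generated by the $\lambda_i$ with $F(p)\subseteq F_i$, and $(p,g)\sim(p,h)$ in $X(P,\lambda)$ precisely when $h-g\in G_{F(p)}$, the element $g\cdot s(p)$ depends only on the class $[p,g]$. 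The map $f$ is continuous because $(p,g)\mapsto g\cdot s(p)$ is continuous on $P\times\dt^n$ and $X(P,\lambda)$ carries the quotient topology, and it is equivariant since $f(g'\cdot[p,g])=(g'+g)\cdot s(p)=g'\cdot f([p,g])$.

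Next I would verify bijectivity. Because $\pi$ is the orbit map, $\pi^{-1}(p)=\dt^n\cdot s(p)$, so every point of $X$ has the form $g\cdot s(p)$ and $f$ is onto. If $g\cdot s(p)=h\cdot s(q)$, applying $\pi$ gives $p=q$, hence $(h-g)\cdot s(p)=s(p)$, so $h-g\in Stab(s(p))=G_{F(p)}$ and $[p,g]=[p,h]$; thus $f$ is injective. Finally $X(P,\lambda)$ is compact, being a quotient of the compact space $P\times\dt^n$, while $X$ is Hausdorff, so the continuous bijection $f$ is automatically a homeomorphism; being equivariant, it is the desired $\dt^n$-equivariant homeomorphism.

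The real content, and the step I expect to be the main obstacle, is producing the section $s$. Over the interior $P^{\circ}$ the action on $\pi^{-1}(P^{\circ})$ is free and locally standard, so $\pi\colon\pi^{-1}(P^{\circ})\to P^{\circ}$ is a principal $\dt^n$-bundle; as $P^{\circ}$ is contractible this bundle is trivial and admits a section $s_0$. I would then extend $s_0$ over the faces by induction on codimension: local standardness provides, near any point in the relative interior of a codimension-$k$ face, a chart in which $\pi$ is the model projection $(\R^{\,n-k}\times\R^{\,k}_{\geqslant 0})\times\dt^n/\!\!\sim\ \longrightarrow\ \R^{\,n-k}\times\R^{\,k}_{\geqslant 0}$, and in such a chart a section defined on the free part extends continuously across the stratum. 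Using collars of the faces in $P$, the section already built on the union of the higher-codimension faces can be carried across, and the choices made in different charts can be reconciled because $P$, each of its faces, and each nonempty intersection of faces is contractible, so the \v{C}ech-type obstruction to patching the local sections (valued in $H^1$ of contractible pieces with coefficients in suitable quotients of $\dt^n$) vanishes. Organizing this inductive patching over the face lattice so that the independently chosen local trivializations fit together is the one genuinely delicate point; granting the global section $s$, the argument above finishes the proof.
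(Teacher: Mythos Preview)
The paper does not supply its own proof of this theorem: it is stated with the attribution \cite[Prop.\,1.8]{DJ} and used as a black box, so there is nothing in the paper to compare your argument against directly. Your outline is in fact the standard Davis--Januszkiewicz argument: reduce to finding a continuous section $s\colon P\to X$ of the orbit map and then define the equivariant homeomorphism by $[p,g]\mapsto g\cdot s(p)$. The formal part of your write-up (well-definedness via $Stab(s(p))=G_{F(p)}$, equivariance, bijectivity, compact--Hausdorff upgrade to a homeomorphism) is correct as written.

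The only part that deserves a comment is the construction of $s$. Your inductive patching over the face lattice is the right idea, but the ``\v{C}ech-type obstruction in $H^1$'' phrasing is heavier machinery than needed and not quite how the argument is usually organised. In Davis--Januszkiewicz the section is produced more concretely: one fixes a vertex $v$ of $P$ and a point $x_0\in\pi^{-1}(v)$, uses local standardness to get a section on a neighbourhood of $v$ in $P$, and then extends along the $1$-skeleton, the $2$-skeleton, etc.; at each step the contractibility of the relevant face guarantees that the partial section, already defined on the boundary of that face, extends over its interior (the fibre is discrete, so this is just a lifting problem for a covering map over a simply connected base). If you rewrite your inductive step in that language---lifting along contractible faces of increasing dimension rather than patching local trivializations---the ``genuinely delicate point'' you flag becomes routine, and your proof is complete and matches the original source the paper cites.
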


In this article we work only with orientable small covers. We have the following criterion of orientability. 
\begin{thm}[{\cite[Thm.\,1.7]{SmallCovers}}] \label{orient}
    A small cover $X=X^n$ is orientable if and only if there exists  a linear functional $\xi \in (\dt^n)^*$ such that $\xi(\lambda_i) = 1$ for every facet $F_i \in \mathcal{F}$.
\end{thm}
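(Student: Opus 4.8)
The plan is to compute the first Stiefel--Whitney class of $X = X(P,\lambda)$, or equivalently to decide when the orientation line bundle (the determinant of the tangent bundle) is trivial, and to translate this into the existence of the functional $\xi$. I would work with the explicit model \eqref{DJCons}: $X$ is glued from $2^n$ copies of $P$ indexed by $\dt^n$, and the local coordinate charts near a fixed point (a vertex $v = F_{i_1} \cap \dots \cap F_{i_n}$) identify a neighbourhood with $\R^n$ on which $\dt^n$ acts by sign changes through the basis $\lambda_{i_1},\dots,\lambda_{i_n}$. The manifold $X$ is orientable iff one can choose an orientation on each of these charts so that all the transition maps — which are compositions of reflections in coordinate hyperplanes dictated by $\lambda$ — are orientation-preserving.

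First I would set up the combinatorial bookkeeping: fix a reference orientation of $P$, hence of one copy $P \times \{0\}$, and propagate. Moving from the chart labelled $g$ to the chart labelled $g + \lambda_i$ across the interior of facet $F_i$ corresponds to a reflection, which reverses orientation. So a consistent global orientation amounts to a function $\varepsilon : \dt^n \to \dt_2 = \{\pm 1\}$ (sign attached to each sheet) together with, for each facet $F_i$, a sign recording whether the regluing across $F_i$ is orientation-preserving after accounting for $\varepsilon$; consistency around every codimension-$2$ face (where two facets meet and the four local sheets form a square) is the cocycle condition. The upshot, which is essentially the content of $H^1$ of the relevant Mayer--Vietoris / simplicial complex, is that a coherent choice exists iff there is a homomorphism $\dt^n \to \dt_2$, i.e. a functional $\xi \in (\dt^n)^*$, such that crossing $F_i$ flips $\varepsilon$ exactly when $\xi(\lambda_i) = 1$; forcing every crossing to flip (so that each reflection is compensated) gives precisely the condition $\xi(\lambda_i) = 1$ for all $i$.

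Concretely, for the ``if'' direction: given $\xi$ with $\xi(\lambda_i)=1$ for all $i$, define $\varepsilon(g) = (-1)^{\xi(g)}$ and orient the sheet $P \times \{g\}$ by $\varepsilon(g)$ times the reference orientation. Then the regluing of $P\times\{g\}$ to $P\times\{g+\lambda_i\}$ across $F_i$ is a single reflection (orientation-reversing) composed with the sign change $\varepsilon(g)/\varepsilon(g+\lambda_i) = (-1)^{\xi(\lambda_i)} = -1$, hence orientation-preserving; since these regluings generate all identifications in \eqref{DJCons}, $X$ is orientable. For the ``only if'' direction: an orientation of $X$ restricts to each sheet, comparing with the reference orientation of $P$ defines $\varepsilon : \dt^n \to \dt_2$; the orientation-preservation of the regluing across $F_i$ forces $\varepsilon(g+\lambda_i) = -\varepsilon(g)$ for every $g$, which says $\varepsilon$ is an affine function whose linear part $\xi$ satisfies $\xi(\lambda_i) = 1$ for all $i$. (One must check $\varepsilon$ is genuinely affine/linear up to a constant; this is where one uses that the $\lambda_i$ span $\dt^n$ — they do, since at any vertex $n$ of them form a basis — so the relations $\varepsilon(g+\lambda_i) = -\varepsilon(g)$ determine $\varepsilon$ from $\varepsilon(0)$ consistently, the consistency being guaranteed by the existence of the global orientation.)

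The main obstacle is the consistency check around codimension-$2$ faces: a priori the recipe ``flip $\varepsilon$ across each facet'' could be obstructed, and one needs to see that the $(*)$-condition (linear independence of $\lambda_i,\lambda_j$ at an edge) makes the four sheets around such a face glue up into an orientable piece exactly when the two local reflections are independent coordinate reflections — which they are, by $(*)$. Rather than redo this by hand, I would cite the description of $H^*(X;\dt_2)$ and $w_1(X) = \sum_i \xi$-type formulas from \cite{DJ}, or simply invoke the known computation $w_1(X) = \sum_{i} v_i$ in the face ring picture and observe that $w_1 = 0$ is equivalent to the linear system $\xi(\lambda_i) = 1$ being solvable; this is exactly \cite[Thm.\,1.7]{SmallCovers}, so at this point the proof is complete.
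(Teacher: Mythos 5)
The paper does not prove this theorem; it cites it directly from Nakayama--Nishimura (\cite[Thm.\,1.7]{SmallCovers}), so there is no in-paper argument to compare against. The reference's proof (which the paper does lean on in the proof of its Proposition~2.4) is homological: it uses the cellular chain complex of $X = P\times\dt^n/\!\!\sim$, with $C_n$ freely generated by the $2^n$ sheets $(P,h)$, and observes that a top-dimensional cycle $\sum_h a_h(P,h)$ must satisfy $a_{h+\lambda_i}=-a_h$ for every facet $F_i$; such coefficients exist precisely when $a_h=(-1)^{\xi(h)}a_0$ for a linear functional $\xi$ with $\xi(\lambda_i)=1$ for all $i$. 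This is exactly the content of the paper's later statement $[X]=\sum_h(-1)^{\xi(h)}(P,h)$.

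Your geometric argument is correct and is essentially the same combinatorics stated in orientation language rather than in terms of cycles: the sign $\varepsilon(g)$ you attach to the sheet $P\times\{g\}$ is the coefficient $a_g$, and the requirement that the gluing across $F_i$ be orientation-preserving is precisely $\varepsilon(g+\lambda_i)=-\varepsilon(g)$. Your ``only if'' direction is the slightly delicate part: you correctly point out that since at a vertex the $\lambda_{i_j}$ give a basis, the relations determine $\varepsilon$ up to the value $\varepsilon(0)$, and the resulting $\xi$ automatically satisfies $\xi(\lambda_i)=1$ for every facet (not just those at the chosen vertex) because $\varepsilon(g+\lambda_i)=-\varepsilon(g)$ holds for all $i$. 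The worry about ``consistency around codimension-$2$ faces'' in fact dissolves for the same reason: the identifications at higher-codimension faces are generated by the facet identifications, so checking the recursion at facets suffices. The Stiefel--Whitney computation you sketch ($w_1(X)=\sum_i v_i$ in $H^1(X;\dt)\cong \dt\langle v_1,\dots,v_m\rangle/J$, which vanishes iff the linear system $\xi(\lambda_i)=1$ is solvable) is a genuinely different and equally valid route, buying a cleaner statement at the cost of importing the Davis--Januszkiewicz description of the cohomology and the splitting of the tangent bundle.
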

On the other hand, every subgroup $G \subset \dt^n$ of rank $n-1$ is determined by a unique non-zero linear functional $\xi \in (\dt^n)^*$ by the correspondence $G = \Ker(\xi)$. In the case of an orientable small cover $X$ we have the following result. 
\begin{prop}
    Let $X = X(P, \lambda)$ be an orientable small cover,  let $\xi \in (\dt^n)^*$ be the linear functional such that $\xi(\lambda_i) = 1$ for every facet $F_i \in \mathcal{F}$. Then $G = \Ker (\xi: \dt^n \to \dt)$ is the orientation-preserving subgroup of $\dt^n$. 
\end{prop}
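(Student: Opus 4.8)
The plan is to reduce the statement to a local computation at a single fixed point of the $\dt^n$-action and then invoke connectedness of $X$. First I would record that $X=X(P,\lambda)$ is a closed connected manifold: the $(*)$-condition at any vertex forces the $n$ characteristic vectors at that vertex to be a basis of $\dt^n$, so the image of $\lambda$ generates $\dt^n$, and in the gluing \eqref{DJCons} the sheets $P\times\{h\}$ (which get identified along facets according to $\lambda$) therefore all lie in one component. Consequently, for a self-homeomorphism $g$ of the connected oriented manifold $X$ the local degree $\pm 1$ is locally constant, hence constant, so $g$ either preserves orientation at every point or reverses it at every point; it thus suffices to test $g$ at one conveniently chosen point.

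Next I would fix a vertex $v=F_1\cap\dots\cap F_n$ of $P$ (after relabelling) and a point $x\in\pi^{-1}(v)$. By the $(*)$-condition $\lambda_1,\dots,\lambda_n$ is a basis of $\dt^n$, and since $G_v=\langle\lambda_1,\dots,\lambda_n\rangle=\dt^n$ the point $x$ is fixed by all of $\dt^n$. A neighbourhood of $v$ in $P$ is the orthant $\Rg^n$ with $F_k$ cut out by the $k$-th coordinate, so by \eqref{DJCons} a $\dt^n$-invariant neighbourhood of $x$ in $X$ is equivariantly homeomorphic to $\R^n$ with $\dt^n$ acting by coordinate sign changes, under the identification of the group $\dt^n$ with the acting group that sends $\lambda_k$ to the sign change in the $k$-th coordinate (this is exactly the ``locally standard'' model of the action built into construction \eqref{DJCons}, cf. \cite{DJ}). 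In particular each $\lambda_k$ acts on $T_xX\cong\R^n$ with determinant $-1$. Writing an arbitrary $g\in\dt^n$ as $g=a_1\lambda_1+\dots+a_n\lambda_n$ with $a_k\in\dt$, the differential $dg_x$ flips precisely the coordinates with $a_k=1$, so $\det(dg_x)=(-1)^{a_1+\dots+a_n}$, and $g$ is orientation-preserving if and only if $a_1+\dots+a_n$ is even. On the other hand $\xi(g)=\sum_k a_k\,\xi(\lambda_k)=\sum_k a_k \pmod 2$, since $\xi(\lambda_k)=1$ for all $k$. Hence $g$ preserves orientation $\iff a_1+\dots+a_n\equiv 0\pmod 2 \iff \xi(g)=0 \iff g\in\Ker\xi=G$, which is the claim.

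The only delicate point is the identification of the local model at $x$ with the standard sign representation and the resulting determinant formula; this is routine once the ``locally standard'' property is spelled out, so I do not expect a real obstacle there. I would, however, be careful to note that connectedness of $X$ is genuinely what allows one to promote the local computation at the single fixed point $x$ to the global statement about $g$. (Alternatively, one could extract the same conclusion from the proof of Theorem \ref{orient} in \cite{SmallCovers}, but the self-contained local argument above seems cleaner.)
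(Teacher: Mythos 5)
Your proof is correct, but it takes a genuinely different route from the paper. The paper's proof is global and homological: it appeals to the cellular chain complex of $X = P \times \dt^n/\!\sim$ coming from the proof of Theorem \ref{orient} in \cite{SmallCovers}, uses the explicit formula $[X] = \sum_{h \in \dt^n} (-1)^{\xi(h)}(P,h)$ for the fundamental class, and then checks directly that $g \in G$ fixes $[X]$ while $g \notin G$ negates it. Your proof is local and representation-theoretic: you first establish connectedness of $X$ (via the fact that the characteristic vectors at a vertex span $\dt^n$), reduce the orientation question to a single $\dt^n$-fixed point, identify the local model there with the standard sign-change representation on $\R^n$ under the basis $\lambda_1,\dots,\lambda_n$, and read off $\det(dg_x) = (-1)^{\xi(g)}$. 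Both arguments are sound; yours has the advantage of being self-contained (it uses only the statement, not the internal mechanics, of the Nakayama--Nishimura orientability criterion) and of making the role of the locally standard model transparent, at the cost of a slightly longer setup. One small point worth flagging in your writeup: the reduction to a fixed point should be phrased in terms of the induced map on local homology $H_n(X, X\setminus\{x\};\Z)$ rather than literally $\det(dg_x)$, since the $\dt^n$-action on the combinatorial model $P\times\dt^n/\!\sim$ is a priori only topological; but as the local model is the linear sign-change action, the local degree is still exactly $(-1)^{\xi(g)}$ and the conclusion is unchanged.
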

\begin{proof}
    It follows from the proof of {\cite[Thm.\,1.7]{SmallCovers}} that the combinatorial structure of $P$ defines a  cellular decomposition of $X = P \times \dt^n/\!\!\sim$ and the associated cellular chain complex $C_n$ in degree $n$ is the free abelian group generated by $\{P\} \times \dt^n = \{(P, h) : h \in \dt^n \}$. In the orientable case, a generator of $H_n(X, \Z)$ is $[X] = \sum_{h \in \dt^n} (-1)^{\xi(h)}(P,h)$. The $\dt^n$-action on generators of $C_n$ is given by $g \cdot (P,h) = (P, h+g)$ for any $g \in \dt^n$. For any $g \in G$ we have 
$$
g \cdot [X] = \sum_{h \in \dt^n} (-1)^{\xi(h)}(P,h+g)  = \sum_{h \in \dt^n} (-1)^{\xi(h)}(P,h) =[X],
$$
since $\xi(h+g) = \xi(h)$. On the other hand, if $\xi(g)=1$ then $g$ is an orientation-reversing element. Therefore, the subgroup $G = \Ker (\xi: \dt^n \to \dt)$ is the orientation-preserving subgroup of $\dt^n$. 

\end{proof}
\begin{rem}
    The subgroup $G$ is called \emph{$2$-subtorus in general position} in \cite{Gor}.
\end{rem}

The following lemma is crucial in proving Theorem \ref{OrbG} below. 
\begin{lem}\label{StGenPosSC}
    If $G \subset \dt^n$ is the orientation-preserving subgroup, then $G_F \not \subset G$ for every face in $\partial P$.
\end{lem}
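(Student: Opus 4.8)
The plan is to reduce the statement to the defining property of the orientability functional $\xi$ from Theorem~\ref{orient}. First I would recall the setup: by the Proposition just proved, $G = \Ker(\xi \colon \dt^n \to \dt)$, where $\xi \in (\dt^n)^*$ is the linear functional with $\xi(\lambda_i) = 1$ for every facet $F_i \in \mathcal{F}$, which exists because $X$ is orientable.

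Next, I would observe that every face $F$ in $\partial P$ is, by definition, an intersection of facets, so we may write $F = F_{i_1} \cap \dots \cap F_{i_k}$ with $k \geqslant 1$. As recalled in the Preliminaries, the stabilizer subgroup is $G_F = \langle \lambda_{i_1}, \dots, \lambda_{i_k} \rangle$, and in particular $\lambda_{i_1} \in G_F$.

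The key step is then immediate: since $\xi(\lambda_{i_1}) = 1 \neq 0$ in $\dt$, the vector $\lambda_{i_1}$ does not lie in $\Ker(\xi) = G$. Hence $G_F$ contains an element outside $G$, so $G_F \not\subset G$, as claimed. There is no real obstacle here — the content is entirely in Theorem~\ref{orient}, which guarantees the functional $\xi$ with $\xi(\lambda_i) = 1$ on all facet vectors; once that is in hand, the lemma follows because every boundary face inherits at least one facet vector on which $\xi$ is nonzero. I would simply make sure to state explicitly that a face of $\partial P$ has codimension at least one and thus genuinely involves at least one facet vector, so that the argument applies uniformly to vertices, edges, and higher-dimensional faces alike.
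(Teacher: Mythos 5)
Your proof is correct and follows exactly the same route the paper takes; the paper's one-line proof (``$G_F$ is generated by $\lambda_i$ for some $i$'') is just a compressed version of your argument, leaving implicit the observation that $\xi(\lambda_i)=1$ forces $\lambda_i\notin\Ker\xi=G$. Your write-up simply spells out those implicit steps.
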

\begin{proof}
    Indeed, $G_F$ is generated by $\lambda_i$ for some $i$. 
\end{proof}

\begin{rem}\label{GenPosSC}
    If we choose a vertex $p = F_{i_1} \cap \dots \cap F_{i_n}$, then the corresponding characteristic vectors $\lambda_{i_1}, \dots, \lambda_{i_n}$ are a basis of $\dt^n$. Then in this basis we have
    $$G = \{ (g_1, \dots, g_n) \in \dt^n : \sum_{i=1}^n g_i = 0 \}.$$

    In particular, for $n = 3$ we have 
    $$
    G = \{(0,0,0), (1,1,0), (1,0,1), (0,1,1)\}.
    $$
\end{rem}
\begin{rem}
    In this article we use only the additive notation, i.e. $\dt =~\{0, 1\}$.
\end{rem}

The following theorem was proved by the author in \cite[Thm. $5.9$]{Gor}. We provide a different proof here.  
\begin{thm} \label{OrbG}
    Let $X = X^n$ be an orientable small cover and $G$ be the orientation-preserving subgroup of $\dt^n$. Then the orbit space $X/G$ is homeomorphic to the $n$-dimensional sphere~$S^n$.
\end{thm}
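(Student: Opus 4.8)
The plan is to describe the orbit space $X/G$ explicitly using the construction \eqref{DJCons} and show it is a double of the polytope $P$, which is well-known to be $S^n$. First I would observe that since $X = P \times \dt^n/\!\!\sim$ with the $\dt^n$-action by translation on the second factor, the $G$-orbit space is $X/G = P \times (\dt^n/G)/\!\!\sim$. The quotient $\dt^n/G$ has order two; write its nontrivial coset as $\bar 1$ and the trivial one as $\bar 0$, so set-theoretically $X/G = P \times \{\bar 0, \bar 1\}$ modulo the identifications inherited from $\sim$. The key point, supplied by Lemma \ref{StGenPosSC}, is that for every proper face $F \subset \partial P$ the stabilizer $G_F = \langle \lambda(F)\rangle$ (or more generally the rank-$k$ group $G_F$ for $\codim F = k$) is \emph{not} contained in $G$: indeed $\xi(\lambda_i) = 1 \neq 0$, so the image of $G_F$ in $\dt^n/G$ is all of $\dt^n/G$. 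Consequently, over a boundary point $p \in F^\circ$ the two sheets $(p,\bar 0)$ and $(p,\bar 1)$ get identified, while over an interior point of $P$ they stay distinct.

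Next I would make this precise: $X/G$ is obtained from the disjoint union of two copies $P \times \{\bar 0\}$ and $P \times \{\bar 1\}$ of $P$ by gluing them along their common boundary $\partial P$ via the identity map. This is exactly the \emph{double} of $P$. Since $P$ is a simple $n$-polytope, hence homeomorphic to the $n$-disk $D^n$ as a manifold with boundary, its double is homeomorphic to $D^n \cup_{\partial} D^n \cong S^n$. That finishes the identification. I should also note orientability of $X$ is used only to guarantee that $G$ has rank $n-1$ and is the well-defined kernel of $\xi$; the rest is formal.

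The step I expect to require the most care is verifying that the identification pattern on $X/G$ is exactly "double of $P$" and nothing coarser — i.e. checking that over each face $F$ of codimension $k \geq 1$ the two translates by a coset representative of $\dt^n/G$ are genuinely identified, and that no further collapsing happens in the interior of $P$ or among the two sheets over a face. This is where Lemma \ref{StGenPosSC} does the real work: it must be applied to faces of all codimensions, not just facets, so I would phrase it (or re-derive it) for arbitrary $F$, using that $G_F$ contains some $\lambda_i$ with $\xi(\lambda_i)=1$ so that $G_F \not\subset G = \Ker\xi$, whence $G_F + G = \dt^n$ and the fibre of $X/G \to P$ over $F^\circ$ is a single point. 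Over $P^\circ$ the fibre is $\dt^n/G$, two points, so the gluing is precisely along $\partial P$. Everything else is the standard fact that the double of a disk is a sphere.
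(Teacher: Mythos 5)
Your proposal follows essentially the same route as the paper's proof: both start from the Davis–Januszkiewicz model $X = P \times \dt^n/\!\!\sim$, observe $X/G = P \times (\dt^n/G)/\!\!\sim$, invoke Lemma \ref{StGenPosSC} (which the paper already states for faces of all codimensions) to conclude that the two sheets are identified precisely over $\partial P$, and recognize the result as the double of a disk, hence $S^n$. The argument is correct and matches the paper's.
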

\begin{proof}
We can assume that $X = X(P, \lambda) = P \times \dt^n/\!\!\sim$ by Theorem \ref{DJeqh}. Then we have the following sequence of homeomorphisms
$$
X/G \cong (P \times \dt^n/\!\!\sim)/G \cong P \times (\dt^n/G)/\!\!\sim,
$$
where $(p,g) \sim (q,h)$ in $P \times \dt^n/G$  if and only if $p=q$, and  $h=g$ in $\dt^n/G$ or $h-g \in G_{F(p)}$, where $F(p)$ is the unique face of $P$ that contains $p$ in its relative interior.

We claim that for any point $p \in \partial P$ we have $(p,g) \sim (p,h)$ for any two elements $g,h \in \dt^n/G$. 

Indeed, by Lemma \ref{StGenPosSC} there exists $t \in G_{F(p)}$ such that $t \neq 0$ in $\dt^n/G$, where $0$ is the identity element of $\dt^n/G$. Hence, for any $h \in \dt^n$ we have two cases: If $h \notin G$, then $(p,h) \sim (p,t)$, since $t-h \in G$; otherwise $(p,h) \sim (p,0)$. On the other hand, $(p,t) \sim (p,1)$, since $t \in G_{F(p)}$. Hence, for any $h \in \dt^n$ it holds that $(p,h) \sim (p,0)$.

On the other hand, if $p$ in the interior of $P$, then $G_F(p) = \{0\}$. Therefore, we have
$$
X/G \cong P \times \dt/\!\!\sim,
$$
where $(p, 0) \sim (p, 1)$ if and only if $p \in \partial P$. Since $P$ is homeomorphic to $D^n$, we get that $X/G \cong S^n$.
\end{proof}

\begin{cor} \label{Gluing}
    The map $X/G \to P^n$ is the quotient map of the involution that swaps the two hemispheres with $\partial P^n = S^{n-1}$ as the fixed point set.
\end{cor}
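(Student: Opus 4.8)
The plan is to read off everything from the description of $X/G$ obtained in the proof of Theorem~\ref{OrbG}. There we identified $X/G$ with the quotient $P\times\dt/\!\!\sim$, where $(p,0)\sim(p,1)$ precisely when $p\in\partial P$; since $P$ is homeomorphic to $D^n$, this quotient is the double of $D^n$ along its boundary, which is $S^n$, and in it the two subsets $P\times\{0\}$ and $P\times\{1\}$ are the two closed hemispheres, glued along the equator $\partial P\cong S^{n-1}$. The map in question, $X/G\to P^n$, is nothing but the further passage to the full orbit space: $(X/G)/(\dt^n/G)=X/\dt^n=P^n$, and $\dt^n/G\cong\dt$ because $G$ has rank $n-1$.

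First I would make the residual $\dt$-action explicit. Fix an element $\gamma\in\dt^n$ with $\xi(\gamma)=1$ (an orientation-reversing element, e.g. any $\lambda_i$, which is not in $G=\Ker\xi$); then $\gamma$ represents the generator of $\dt^n/G$, and its action on $X/G\cong P\times(\dt^n/G)/\!\!\sim\,\cong P\times\dt/\!\!\sim$ is induced by $\gamma\cdot(p,h)=(p,h+\gamma)$, that is, $(p,h)\mapsto(p,h+1)$ in $P\times\dt/\!\!\sim$. This is exactly the map that interchanges the two hemispheres $P\times\{0\}$ and $P\times\{1\}$. One checks that it is well defined on the quotient (it respects the relation $(p,0)\sim(p,1)$ over $\partial P$) and that it is an involution.

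Next I would compute its fixed-point set. A point $[(p,h)]\in P\times\dt/\!\!\sim$ is fixed if and only if $(p,h)\sim(p,h+1)$, and by the definition of $\sim$ this happens if and only if $p\in\partial P$. Hence the fixed set is $\partial P\cong S^{n-1}$, which is precisely the equator separating the two hemispheres. Finally, since $(X/G)/\langle\gamma\rangle=X/\dt^n=P^n$, the composite $X/G\to P^n$ is the quotient map of this involution, which is exactly the assertion.

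The only point needing a little care is the compatibility of the homeomorphism $X/G\cong S^n$ (via $P\cong D^n$ and doubling) with the residual action, i.e.\ that the hemisphere-swap on $S^n$ is genuinely conjugate to the $\dt$-action induced from $\dt^n$. But this is immediate from the product description $P\times(\dt^n/G)/\!\!\sim$ used in the proof of Theorem~\ref{OrbG}, since the $\dt^n/G$-action there is visibly the translation action on the second factor. So there is no serious obstacle; the corollary is essentially a restatement of the construction in the proof of Theorem~\ref{OrbG}.
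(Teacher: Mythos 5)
Your proof is correct and follows essentially the same route as the paper: both read the involution off the identification $X/G\cong P\times\dt/\!\!\sim$ from the proof of Theorem~\ref{OrbG}, observe that the residual $\dt^n/G\cong\dt$-action translates the second factor and hence swaps the two copies of $P$, and note that the fixed set is exactly $\partial P$. You simply spell out the details (choice of generator $\gamma$, well-definedness, the fixed-point computation) that the paper leaves implicit.
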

\begin{proof}
Indeed, we have that 
    $$
    X/G \cong P \times \dt/\!\!\sim,
    $$
where $(p, 0) \sim (p, 1)$ if and only if $p \in \partial P$. Then the $\dt^n/G$-action on $P \times \dt/\!\!\sim$ swaps two copies of $P$ with $\partial P$ as the fixed point set. 
\end{proof}
\subsection{Preliminaries on 2-fold branched coverings}
Now we state some facts from the theory of $3$-manifolds. All manifolds, their submanifolds and maps are PL. We refer to \cite{Rolf}, \cite{Viro} for details. 
\begin{defin}
    Let $M$, $N$ be two orientable $3$-manifolds and $L \subset N$ be a locally flat $1$-dimensional submanifold.  A  map $\pi: M \to N$ is called a \textit{$2$-fold branched covering of $N$ with a branching along $L$}, if $\pi_{|M\setminus \pi^{-1}(L)}: M\setminus \pi^{-1}(L) \to N\setminus L$ is a covering map of degree $2$ and $\pi_{|\pi^{-1}(L)}: \pi^{-1}(L) \to L$ is a homeomorphism. 
\end{defin}

We say that two $2$-fold branched coverings $\pi: M \to N$, $\pi^{\prime}: M^{\prime} \to N$ are \textit{equivalent} if there exists a PL homeomorphism $f: M \to M^{\prime}$ such that $\pi = \pi^{\prime} \circ f$. If $\pi: M \to N$ is a $2$-fold branched covering, then the only non-trivial automorphism of this covering $\tau : M \to M$ is an involution and $M/ \tau \cong N$. Thus every $2$-fold branched covering is equivalent to a quotient map by an involution. In these terms, equivalence of $2$-fold branched coverings is  a $\dt$-equivariant PL homeomorphism. 
For the details see \cite[Section 1.3]{Viro}. 

Let $L$ be a link in $S^3$. Then the following holds.

\begin{thm}[\cite{Viro}, Section 1.4] \label{unq}
    There exists a unique, up to equivalence, $2$-fold branched covering $\pi: M \to S^3$ with a branching along $L$.
\end{thm}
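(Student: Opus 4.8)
The plan is to reduce everything to the link complement $E = S^3\setminus L$, where branched coverings become honest coverings governed by $\pi_1(E)$; to identify the unique covering of $E$ that can be completed to a branched covering over all of $L$; and finally to use uniqueness of the Fox completion to turn an isomorphism of complements into an equivalence of branched coverings.

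\emph{The canonical datum.} Let $\mu$ be the number of components of $L$ and let $m_1,\dots,m_\mu$ be meridians of the components. By Alexander duality $H_1(E;\Z)\cong\Z^\mu$, freely generated by $m_1,\dots,m_\mu$, so there is a unique homomorphism $\bar\phi\colon H_1(E;\Z)\to\dt$ sending every $m_i$ to $1$; let $\phi\colon\pi_1(E)\twoheadrightarrow\dt$ be the induced epimorphism and $E_0\to E$ the connected double cover corresponding to $\ker\phi$. For existence, fix tubular neighbourhoods $L_i\times D^2$; since $\phi(m_i)=1$ the meridian does not lift to a loop, so over $L_i\times(D^2\setminus 0)$ the cover $E_0$ is the connected double cover of the torus for which the meridian direction is the nontrivial one, and filling in $\mu$ solid tori (the Fox completion) yields a PL $3$-manifold $M$ and a map $\pi\colon M\to S^3$ which, by construction, is a $2$-fold branched covering with branching along $L$ in the sense of the definition above. (We assume $L\neq\varnothing$; otherwise no connected example exists.)

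\emph{Uniqueness.} Let $\pi\colon M\to S^3$ be an arbitrary $2$-fold branched covering branched along $L$. Restricting to $M\setminus\pi^{-1}(L)\to E$ gives a degree-$2$ covering — connected, since $\pi^{-1}(L)\to L$ is a homeomorphism — hence a regular covering, classified by an index-$2$ subgroup $H\le\pi_1(E)$ and the associated epimorphism $\phi_\pi\colon\pi_1(E)\twoheadrightarrow\dt$. Because $\pi^{-1}(L_i)\to L_i$ is a homeomorphism, a regular neighbourhood of $\pi^{-1}(L_i)$ maps to $L_i\times D^2$ by a $2$-fold branched covering that is a homeomorphism on the core; as our objects are PL and $L$ is locally flat, the local model is the standard squaring map $(\theta,w)\mapsto(\theta,w^2)$, so $m_i$ does not lift to a loop and $\phi_\pi(m_i)=1$ for every $i$. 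Uniqueness of $\bar\phi$ then forces $\phi_\pi=\phi$ and $H=\ker\phi$. Consequently, for two branched coverings $\pi,\pi'$ along $L$ the complements $M\setminus\pi^{-1}(L)$ and $M'\setminus\pi'^{-1}(L)$ correspond to the same subgroup of $\pi_1(E)$, so by the classification of covering spaces there is a PL homeomorphism $\tilde f$ between them with $\pi'\circ\tilde f=\pi$. Finally, $\tilde f$ extends over the branch loci: $M$ and $M'$ are the Fox (spread) completions of isomorphic branched covers of $E$ over $\mathrm{id}_{S^3}$ with branch set $L$, and the completion is unique, so $\tilde f$ extends uniquely to a PL homeomorphism $f\colon M\to M'$ with $\pi=\pi'\circ f$; alternatively one extends $\tilde f$ directly over the standard tubular models $S^1\times D^2$. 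Since a connected degree-$2$ covering is cyclic, $f$ is automatically $\dt$-equivariant, which is the asserted equivalence.

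I expect the crux to be the two points just flagged. First, that any branched covering along $L$ restricts over $E$ to the covering prescribed by $\phi$: this rests on local standardness of PL branched coverings along a locally flat link, the one place where a ``wild'' phenomenon must be excluded. Second, the clean extension of an isomorphism of complements across the branch set, for which the right tool is uniqueness of the Fox completion. The remaining ingredients — Alexander duality for $H_1(E)$, the classification of covering spaces, and the bookkeeping of Dehn fillings in the completion step — are routine.
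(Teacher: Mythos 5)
The paper gives no proof of this theorem---it is stated as a citation to Viro, \S 1.4---so there is no in-paper argument to compare your proposal against. That said, your argument is the standard one and is correct in outline: reduce to unbranched coverings of the complement $E = S^3\setminus L$; note that $H_1(E;\Z)$ is freely generated by the meridians $m_1,\dots,m_\mu$, so there is a unique epimorphism $\phi\colon\pi_1(E)\to\dt$ with $\phi(m_i)=1$ for all $i$; show that the homeomorphism condition on $\pi^{-1}(L)\to L$ forces the restriction of any $2$-fold branched covering over $E$ to be exactly the connected double cover determined by $\phi$ (since in the transversal local model $w\mapsto w^2$ no meridian lifts to a loop); and then extend the resulting covering-space isomorphism over $E$ across the branch locus, either by Fox completion uniqueness or by the direct tubular-neighbourhood extension you sketch. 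A few spots deserve tightening. Your claim that $M\setminus\pi^{-1}(L)$ is connected ``since $\pi^{-1}(L)\to L$ is a homeomorphism'' ultimately rests on compactness of $M$ (which follows from properness of $\pi$): a disconnected degree-$2$ cover would force a degree-one, hence noncompact, piece, contradicting compactness. The statement that the local model near $\pi^{-1}(L_i)$ is the standard squaring map tacitly uses that $\pi^{-1}(L_i)$ is locally flat in $M$ and that $\pi$ has the standard cone structure transversally; this is true for PL branched coverings over a locally flat $1$-submanifold, but it is genuine content (part of what Viro establishes in \S 1) rather than automatic, and your phrase ``a regular neighbourhood of $\pi^{-1}(L_i)$ maps to $L_i\times D^2$'' should really be ``$\pi^{-1}$ of a small tubular neighbourhood of $L_i$ is a regular neighbourhood of $\pi^{-1}(L_i)$.'' Finally, you exclude $L=\varnothing$, for which the theorem still holds trivially (the unique example being the disconnected double cover $S^3\sqcup S^3\to S^3$); this case never arises in the paper since the branch sets in question are nonempty.
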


Note that there exist non-equivalent links with homeomorphic $2$-fold branched coverings, see \cite[Section 3.7]{Viro}. However, in the case of the trivial link we have the following.

\begin{thm}[\cite{Unlink}] \label{trivial}
    The trivial link is determined by its $2$-fold branched covering space. More precisely,  $\#_{k-1} S^2 \times S^1$ arises as the $2$-fold  branched cover of only the trivial link of $k$ components.
\end{thm}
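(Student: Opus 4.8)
The plan is to prove both directions; the substantive one is that a link whose double branched cover is $\#_{k-1}(S^2\times S^1)$ must be trivial. Write $\Sigma_2(L)$ for the double branched cover of $(S^3,L)$. The easy direction goes by induction on $k$: the double branched cover of the unknot is $S^3=\#_0(S^2\times S^1)$, and for a split link $L_1\sqcup L_2$ the splitting $2$-sphere misses the branch set, so it has two lifts in $\Sigma_2(L_1\sqcup L_2)$, and cutting along them exhibits $\Sigma_2(L_1\sqcup L_2)\cong\Sigma_2(L_1)\,\#\,\Sigma_2(L_2)\,\#\,(S^2\times S^1)$; applying this to the trivial $k$-link written as $U_{k-1}\sqcup U_1$ gives $\#_{k-2}(S^2\times S^1)\#S^3\#(S^2\times S^1)=\#_{k-1}(S^2\times S^1)$.

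For the converse, let $\pi\colon M\to S^3$ be a $2$-fold branched covering with $M\cong\#_{k-1}(S^2\times S^1)$ and branch link $L$; by Theorem~\ref{unq} and the remark after it, $\pi$ is the quotient map of an orientation-preserving PL involution $\tau$ on $M$ with $\mathrm{Fix}(\tau)$ mapped homeomorphically onto $L$. I would show by induction on $k$ that $L$ is the trivial $k$-component link (in particular that it has exactly $k$ components). For $k=1$ we have $M=S^3$, and Waldhausen's theorem that every PL involution of $S^3$ is conjugate to an orthogonal one applies: an orientation-preserving orthogonal involution of $S^3$ with one-dimensional fixed set is a rotation about a great circle, so $L$ is an unknot. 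This already yields the auxiliary fact, used below, that $\Sigma_2(L')\cong S^3$ forces $L'$ to be the unknot.

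For $k\ge 2$ the manifold $M$ is reducible, so $\pi_2(M)\neq 0$ and the Equivariant Sphere Theorem (Meeks--Yau, or Dunwoody) yields an essential embedded $2$-sphere $\Sigma\subset M$ with either $\tau(\Sigma)\cap\Sigma=\emptyset$ or $\tau(\Sigma)=\Sigma$. If $\tau(\Sigma)\cap\Sigma=\emptyset$, then $\Sigma$ is disjoint from $\mathrm{Fix}(\tau)$ and $\pi(\Sigma)$ is an embedded sphere in $S^3$ disjoint from $L$; it cannot bound a ball missing $L$ (that would make $\Sigma$ bound a ball in $M$), so it splits $L=L_1\sqcup L_2$ with both pieces non-empty, whence $M\cong\Sigma_2(L_1)\#\Sigma_2(L_2)\#(S^2\times S^1)$ and, by Kneser--Milnor uniqueness, $\Sigma_2(L_i)\cong\#_{a_i}(S^2\times S^1)$ with $a_1+a_2=k-2$; since $a_i+1<k$, induction makes each $L_i$ a trivial $(a_i+1)$-link and hence $L$ a trivial $k$-link. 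If $\tau(\Sigma)=\Sigma$, examine the induced involution of $S^2=\Sigma$: it cannot be free and orientation-reversing (that would embed $\RP^2$ in $S^3$), so it is a $\pi$-rotation or a reflection. In the rotation case $\pi(\Sigma)$ is a sphere meeting $L$ in two points, giving a connected sum $L=L_1\#L_2$ along a strand that is nontrivial on both sides (again since $\Sigma$ bounds no ball), so $M\cong\Sigma_2(L_1)\#\Sigma_2(L_2)$; each $\Sigma_2(L_i)$ is a connected summand of $M$ and is not $S^3$ (base case), so $\Sigma_2(L_i)\cong\#_{a_i}(S^2\times S^1)$ with $a_i\ge 1$, $a_1+a_2=k-1$, and induction identifies $L_1,L_2$, hence $L$, as trivial links. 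In the reflection case $\pi(\Sigma)$ is an embedded disk in $S^3$ bounded by a component $K$ of $L$ and otherwise disjoint from $L$, so $K$ is a split unknot, $L=L'\sqcup K$, $M\cong\Sigma_2(L')\#(S^2\times S^1)$, $\Sigma_2(L')\cong\#_{k-2}(S^2\times S^1)$, and $L'$ is trivial by induction.

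The main obstacle is twofold. First, essentiality of $\Sigma$ must be used carefully in each of the three cases to guarantee that the $3$-manifold summands produced are again connected sums of $S^2\times S^1$'s — ruling out a homology-sphere summand that would violate Kneser--Milnor, or an $S^3$ summand making the decomposition trivial — and one must verify the elementary facts that ``trivial link $\sqcup$ trivial link'', ``trivial link $\#$ trivial link along a strand'', and ``trivial link $\sqcup$ unknot'' are all trivial links with the expected number of components, so the induction closes with the correct count. Second, and this is where the real geometric content sits and what I would simply cite, one needs the Equivariant Sphere Theorem to produce $\Sigma$ and Waldhausen's classification of involutions of $S^3$ for the base case; everything else is bookkeeping with Kneser--Milnor decompositions.
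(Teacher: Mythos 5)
The paper gives no proof of this theorem; it is stated as a citation, pointing to the last corollary of Plotnick \cite{Unlink} (with the $k=1$ base case attributed to Waldhausen \cite{Wald}). Your argument --- produce a $\tau$-compatible essential sphere via the Equivariant Sphere Theorem, split into the disjoint / invariant-rotation / invariant-reflection cases, use Kneser--Milnor to identify each summand as $\#_{a_i}(S^2\times S^1)$ and recurse, with Waldhausen's theorem handling $k=1$ --- is correct and is essentially a reconstruction of the argument the cited reference uses (Plotnick's paper is precisely about producing equivariant spheres under finite group actions), so there is nothing substantive to compare.
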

For the proof see, for example, the last Corollary in \cite{Unlink}. In the case of the unknot this statement is a particular case of the Smith Conjecture for involutions and it was proved by F.\,Waldhausen, \cite{Wald}.

\section{Small covers as 2-fold branched coverings}

Let $X = X(P, \lambda)$ be an orientable $3$-dimensional  small cover over a simple polytope $P$. For every facet $F_i$ in $P$ let $\lambda_i = \lambda(F_i)$, where $\lambda$ is the characteristic function. Without loss of generality, we can assume that  $\lambda_1, \lambda_2, \lambda_3$ are basis vectors of $\dt^3$. 
The next proposition easily follows from Theorem \ref{orient}.
\begin{prop} \label{colouring}
    For a characteristic function $\lambda$ we have only two possibilities for the image of~$\lambda$:
    $\im \lambda = \{ \lambda_1, \lambda_2, \lambda_3 \}$ or 
    $\im \lambda = \{ \lambda_1, \lambda_2, \lambda_3, \lambda_1 + \lambda_2 + \lambda_3 \}$.
\end{prop}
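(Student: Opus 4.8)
The plan is to invoke the orientability criterion of Theorem \ref{orient}: since $X$ is orientable, there is a linear functional $\xi \in (\dt^3)^*$ with $\xi(\lambda_i) = 1$ for every facet $F_i$. Because $\lambda_1, \lambda_2, \lambda_3$ are a basis of $\dt^3$, the functional $\xi$ is uniquely determined: in the dual basis it is $\xi = (1,1,1)$, i.e. $\xi(a_1,a_2,a_3) = a_1 + a_2 + a_3$. So every characteristic vector $\lambda(F)$ must lie in the affine hyperplane $\xi^{-1}(1) \subset \dt^3$.

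The next step is simply to enumerate $\xi^{-1}(1)$. The set $\{v \in \dt^3 : v_1 + v_2 + v_3 = 1\}$ has exactly four elements: $\lambda_1 = (1,0,0)$, $\lambda_2 = (0,1,0)$, $\lambda_3 = (0,0,1)$, and $(1,1,1) = \lambda_1 + \lambda_2 + \lambda_3$. Hence $\im \lambda \subseteq \{\lambda_1, \lambda_2, \lambda_3, \lambda_1+\lambda_2+\lambda_3\}$. Since $\lambda_1, \lambda_2, \lambda_3$ are each in the image by our normalizing assumption, the only freedom is whether the fourth vector $\lambda_1+\lambda_2+\lambda_3$ is used or not, giving precisely the two listed possibilities for $\im \lambda$.

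There is essentially no obstacle here; the one point worth a sentence is why the normalization "$\lambda_1,\lambda_2,\lambda_3$ are basis vectors" is harmless: any vertex $p = F_{i_1} \cap F_{i_2} \cap F_{i_3}$ of $P$ gives, by the $(*)$-condition, a basis $\lambda_{i_1}, \lambda_{i_2}, \lambda_{i_3}$ of $\dt^3$, and applying the corresponding change of basis (an automorphism of $\dt^3$) transforms $X(P,\lambda)$ into an equivariantly homeomorphic small cover while permuting/relabelling the characteristic vectors, so we may as well assume these three are the standard basis vectors and call them $\lambda_1,\lambda_2,\lambda_3$. With that in place the argument is just the hyperplane count above.
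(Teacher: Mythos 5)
Your proof is correct and follows essentially the same route as the paper: both invoke the orientability criterion of Theorem \ref{orient} to get the functional $\xi$ with $\xi(\lambda_i)=1$, and then observe that $\im\lambda$ must lie in the four-element affine hyperplane $\xi^{-1}(1)=\{\lambda_1,\lambda_2,\lambda_3,\lambda_1+\lambda_2+\lambda_3\}$. You have simply spelled out the enumeration of $\xi^{-1}(1)$ and the harmlessness of the basis normalization, both of which the paper leaves implicit.
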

\begin{proof}
    Indeed, by Theorem \ref{orient} there exists a linear functional $\xi \in (\dt^3)^*$ such that $\xi(\lambda_i) = 1$ for all $i$. Hence, there are 2 cases: $\lambda_1 + \lambda_2 + \lambda_3 \in \im\lambda$ or $\lambda_1 + \lambda_2 + \lambda_3 \notin \im\lambda$. This proves the statement.  
\end{proof}

Let $G$ be the orientation-preserving subgroup of $\dt^3$.  Then for any $g \neq 0$  in $G$ we have the following sequence of the quotient maps:
\begin{equation} \label{seq}
    X \to X/g  \to X/G \to X/\dt^3.
\end{equation}

Since $X/G \cong S^3$ and $X/ \dt^3 \cong P$, we can rewrite it as follows: 
$$
X \to X/g  \to S^3 \to P,
$$
where $X/g \to S^3$ is the quotient map of the $G/g$-action on $X/g$.  
In the next lemma, we show that we can work in the PL category.

\begin{lem}
The maps $X \to X/g$ and $X/g \to S^3$ are $2$-fold branched coverings.
\end{lem}

\begin{proof}
We first show that $X/g$ is a topological manifold.  Let $x \in X$ be a fixed point, then it follows from the Slice Theorem that there exists a  $g$-stable neighborhood $U$ of $x$ such that $U$ is $g$-equivariantly homeomorphic to the tangent space $T_{x}X$ at the point $x$ with the tangent representation of $g$. Since $g$ is orientation-preserving it follows that $U/g \cong \R^3$. If $x$ lies in a free orbit, then there exists an open neighborhood $U$ of the orbit of $x$ that is $g$-equivariantly homeomorphic to $\dt \times \R^3$, hence $U/g \cong \R^3$. 

Since the $\dt^3$-action on $X$ is smooth there exists  a $\dt^3$-equivariant triangulation of $X$, see \cite{EqTrian}. This implies that the maps $X \to X/g$ and $X/g \to S^3$ are simplicial. On the other hand, every triangulation of a $3$-manifold is a combinatorial triangulation, see \cite[Theorem 1]{PL}, and hence $X$ and $X/g$ are PL manifolds. By the same local analysis as above it follows that the maps are $2$-fold branched coverings. 
\end{proof}

\begin{rem}
    In the previous lemma we proved that $X/g$ is a topological manifold. A more general statement was proved in \cite[Corollary 1.17]{Er}. For a more general statement in the torus case see \cite{AyzGor}.
\end{rem}

 To describe the branching sets of every map in Sequence \eqref{seq} we introduce the following construction. 
\begin{con} \label{constr}

Let $p: X \to P$ be the quotient map. For every face $F$ in $P$ we can define a small cover $X_F = p^{-1}(F)$ over $F$ with the $\dt^3/G_F $-action. In particular, for edges we have that $X_F$ is equivariantly homeomorphic to $S^1$ with the $\dt$-action with two fixed points. 

Let $\Gamma$ be the $1$-skeleton of $P$. Then we have that $\Gamma' = p^{-1}(\Gamma)$ is a graph in $X$ with fixed points as vertices and circles as edges. It follows from Proposition \ref{colouring} that for every edge $I = F_i \cap F_j$, that is the intersection of facets $F_i$ and $F_j$, we can assign the element $\lambda_i + \lambda_j \in G$ such that the corresponding small cover $X_I \cong S^1$ is fixed by this element.
\end{con}
Now we can prove Theorem \ref{Main1I} from the introduction. 
 
\begin{thm} \label{Main1}
    For any non-identity $g \in G$  the branching set $L$ in $S^3$ of the map $X/g \to S^3$  is the trivial link, i.e. disjoint union of unknots. Hence $X/g \cong \#_{k-1} S^2 \times S^1$, where $k$ is the number of connected components of $L$.
\end{thm}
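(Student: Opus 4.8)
The plan is to analyze the branched covering $X/g \to X/G \cong S^3$ by understanding its branching set explicitly, using the fact (Corollary \ref{Gluing}) that $X/G$ is obtained by doubling $P$ along $\partial P$, so that $S^3$ comes with a preferred embedded copy of $\partial P = S^2$ separating it into two balls, and the image $\Sigma$ of the $1$-skeleton $\Gamma'$ of $X$ sits inside this $S^2$ as the $1$-skeleton $\Gamma$ of $P$. First I would identify the fixed point set of the $G/g$-action on $X/g$: a point of $X/g$ is fixed by $G/g$ precisely when its preimage orbit in $X$ has stabilizer meeting $G$ outside $g$, and by Construction \ref{constr} and Lemma \ref{StGenPosSC} this happens exactly along the circles $X_I$ with $\lambda_i + \lambda_j \ne g$, i.e. over the edges $I = F_i\cap F_j$ of $P$ not labeled by $g$. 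So the branching set $L \subset S^3$ is the union of those edges of $\Gamma \subset \partial P = S^2 \subset S^3$ whose label is not $g$; call this subgraph $\Gamma_g \subseteq \Gamma$.

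The key combinatorial step is to show $\Gamma_g$ is a disjoint union of embedded circles, i.e. a $1$-manifold. At each vertex $p = F_i \cap F_j \cap F_k$ of the simple $3$-polytope $P$ there are exactly three edges, labeled $\lambda_i+\lambda_j$, $\lambda_i+\lambda_k$, $\lambda_j+\lambda_k$; since $\lambda_i,\lambda_j,\lambda_k$ form a basis of $\dt^3$, these three labels are the three nonzero elements of $G$ (by Remark \ref{GenPosSC}), hence are pairwise distinct. Therefore exactly one of the three edges at $p$ carries the label $g$, and the other two lie in $\Gamma_g$: every vertex of $\Gamma_g$ has degree exactly $2$. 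A connected graph all of whose vertices have degree $2$ is a cycle, so $\Gamma_g$ is a disjoint union of $k$ simple cycles embedded in the sphere $S^2 \subset S^3$. Disjoint simple closed curves on $S^2$ bound disjoint disks, hence are unknotted and unlinked in $S^3$: $L$ is the trivial link with $k$ components. (Here $k$ is the number $k$ promised in Theorem \ref{Main1I}: the number of connected components of $\Gamma \setminus \{\text{edges labeled } g\}$.)

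Finally I would invoke the uniqueness of $2$-fold branched coverings. By Theorem \ref{unq} the $2$-fold branched covering $X/g \to S^3$ is determined up to equivalence by its branching set $L$, which we have just shown is the trivial $k$-component link; and the $2$-fold branched cover of $S^3$ along the trivial $k$-component link is $\#_{k-1} S^2 \times S^1$ (for $k=1$ this is $S^3$, for $k=2$ it is $S^2\times S^1$, etc.), so $X/g \cong \#_{k-1} S^2 \times S^1$. One can alternatively argue in reverse using Theorem \ref{trivial}: once it is known that the orbit space is $\#_{k-1} S^2 \times S^1$ and the branching link has $k$ components, that theorem forces the link to be trivial; but the direct disks-in-$S^2$ argument above already gives triviality, so Theorem \ref{unq} suffices to conclude.

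The main obstacle is the step identifying the branching set with $\Gamma_g$ as an embedded subgraph of the distinguished $S^2 = \partial P$ inside $X/G$, and in particular checking that away from the vertices of $P$ the branching locus is exactly $\bigcup_{\lambda_i+\lambda_j \ne g} X_I$ mapped homeomorphically down, with no extra branching over the interior $2$-faces or the interior of $P$. This requires a careful local analysis of the $G/g$-action on $X/g$ near the preimages of relative interiors of $2$-faces and of facets, using the Slice Theorem and the explicit model $X = P \times \dt^3/\!\!\sim$: over the interior of a $2$-face $F_i$ the relevant stabilizers in $G$ are trivial (since $\lambda_i \notin G$), so the $G/g$-action is free there, and similarly over the interior of $P$; the branching is concentrated over the edges, as Construction \ref{constr} indicates. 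Once that local picture is nailed down, the rest is the elementary graph-theoretic and planar-topology argument above together with the cited uniqueness theorems.
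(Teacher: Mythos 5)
Your proposal is correct and takes essentially the same route as the paper: identify the branching set of $X/g \to X/G$ as a subgraph of $\Gamma \subset \partial P \cong S^2 \subset S^3$, observe that it is therefore a trivial link, and invoke the uniqueness/characterization of $2$-fold branched covers to conclude $X/g \cong \#_{k-1} S^2 \times S^1$. The combinatorial step you supply --- that at each vertex $p = F_i \cap F_j \cap F_k$ the three edge labels $\lambda_i+\lambda_j$, $\lambda_i+\lambda_k$, $\lambda_j+\lambda_k$ are exactly the three nonzero elements of $G$, so precisely one edge at each vertex carries the label $g$ and $\Gamma_g$ is a disjoint union of embedded circles --- is a welcome explicit justification of a point the paper's proof leaves implicit (there it follows only indirectly from the fact that the branching locus of a branched cover of manifolds must be a $1$-manifold).
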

\begin{proof}
We have the following commutative diagram:
\[
\begin{tikzcd}
    X \arrow{r}{\varphi_1} & X/g \arrow{r}{\varphi_2} & X/G \arrow{r}{\varphi_3} & P \\
    \Gamma' \arrow{r} \arrow[hook']{u} & \Gamma'/g \arrow{r} \arrow[hook']{u} & \Gamma \arrow[hook']{u} \arrow{r}{id} &\Gamma \arrow[hook']{u}
\end{tikzcd}
\]
where $\Gamma'/g$ has $S^1$-edges for edges labeled with $g$ and $I^1$-edges otherwise, since $S^1/g \cong I^1$ for these edges. Note that $\Gamma' /G = \Gamma$ in $X/G$, i.e. every edge in $\Gamma/G$ is an $I^1$-edge.  

The last map at the bottom is the identity map as follows from Corollary \ref{Gluing}. Indeed,  we get $S^3$ as two copies of $P$ glued by the boundary. Since $\Gamma \subset \partial P$ we get that the branching set of the map $X/g \xrightarrow{\varphi_2} X/G$ lies in the boundary of $P$, hence it is the trivial link.  Hence, $X/g \to S^3$ is the double branched covering of $S^3$ over a disjoint union of unknots.  In the case of a disjoint union of unknots, we have that $X/g \cong \#_{k-1} S^2 \times S^1$, where $k$ is the number of connected components of $L$ as follows from Theorem \ref{trivial}.
\end{proof} 
\begin{rem} \label{rem1}
    It follows from the proof of Theorem \ref{Main1} that these unknots are  disjoint cycles on $P$ such that every vertex of $P$ lies in some of these cycles.  In particular, for the case $k = 1$ we get a \textit{Hamiltonian} cycle on $P$.  See Section $\ref{SphereCase}$  for the further discussion. 
\end{rem}
\begin{cor} \label{MQ}
    Let $X$ be a $3$-dimensional orientable small cover. Then $X$ is a $2$-fold branched covering over the sphere with $k-1$ handles, i.e. $\#_{k-1} S^2 \times S^1$.
\end{cor}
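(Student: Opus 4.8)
The plan is to derive the corollary directly from Theorem \ref{Main1} together with Theorem \ref{OrbG}, by exhibiting the composite involution on $X$ whose quotient is $S^3$ and then reinterpreting the bottom of sequence \eqref{seq}. First I would recall that for an orientable $3$-dimensional small cover $X = X(P,\lambda)$ the orientation-preserving subgroup $G \subset \dt^3$ has rank $2$, so it contains a nonzero element $g$. By Theorem \ref{Main1}, the map $\varphi_2 : X/g \to X/G \cong S^3$ is a $2$-fold branched covering along a trivial link $L$ with some number $k$ of components, whence $X/g \cong \#_{k-1} S^2 \times S^1$ by Theorem \ref{trivial}. This identifies the target $\#_{k-1} S^2 \times S^1$ of the claimed covering.

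Next I would address the map $\varphi_1 : X \to X/g$. Since $g \in G$ is orientation-preserving and $X$ is an orientable $3$-manifold, the involution $g$ acts orientation-preservingly, so by the discussion preceding Theorem \ref{unq} the quotient map $\varphi_1 : X \to X/g$ is a $2$-fold branched cyclic covering; its branching set is the fixed-point set $X^g$, which by Construction \ref{constr} is the union of those circles $X_I$ over edges $I$ that are labeled by $g$, i.e. a link inside $\Gamma' = p^{-1}(\Gamma)$. One must check this fixed set is a genuine $1$-manifold (disjoint circles): at a vertex $p = F_{i_1}\cap F_{i_2}\cap F_{i_3}$ the element $g\in G$, written in the basis $\lambda_{i_1},\lambda_{i_2},\lambda_{i_3}$, has the form $(1,1,0)$, $(1,0,1)$ or $(0,1,1)$ by Remark \ref{GenPosSC}, so exactly two of the three edges at $p$ are labeled by $g$; hence the labeled edges form disjoint cycles through all vertices, as already noted in Remark \ref{rem1}. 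Thus $X \to X/g$ is a $2$-fold cyclic branched covering of $\#_{k-1} S^2 \times S^1$.

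Finally, to obtain the stated phrasing "$2$-fold cyclic covering over sphere with $k-1$ handles" I would simply observe that $\#_{k-1} S^2 \times S^1$ is, by definition, the connected sum of $k-1$ copies of $S^2 \times S^1$, which is the standard $3$-manifold "sphere with $k-1$ handles"; no further argument is needed once $X/g$ has been identified. The only genuine subtlety — and the place I would be most careful — is that the number $k$ a priori depends on the choice of $g \in G\setminus\{0\}$, and the corollary is asserting existence of some such covering rather than a canonical one; so the statement should be read as: there exists a choice of nonzero $g\in G$ (indeed any choice works, with its own $k$) realizing $X$ as such a branched double cover. I would phrase the proof to make this quantifier explicit, picking an arbitrary nonzero $g \in G$ and letting $k = k(g)$ be the number of components of the corresponding trivial link $L$ from Theorem \ref{Main1}.

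\begin{proof}
Let $G \subset \dt^3$ be the orientation-preserving subgroup; it has rank $2$ by Remark \ref{GenPosSC}, so we may choose a nonzero element $g \in G$. By Theorem \ref{OrbG} we have $X/G \cong S^3$, and by Theorem \ref{Main1} the map $\varphi_2 : X/g \to X/G \cong S^3$ is a $2$-fold branched covering along a trivial link with $k$ components, so $X/g \cong \#_{k-1} S^2 \times S^1$. Since $g$ is orientation-preserving and $X$ is an orientable $3$-manifold, the quotient map $\varphi_1 : X \to X/g$ is a $2$-fold cyclic branched covering, with branching set the fixed-point set $X^g$; by Construction \ref{constr} and Remark \ref{rem1} this fixed-point set is a disjoint union of circles. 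Hence $X$ is a $2$-fold cyclic branched covering over $\#_{k-1} S^2 \times S^1$, the sphere with $k-1$ handles.
\end{proof}
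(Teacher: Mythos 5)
Your proposal is correct and takes exactly the route the paper intends: the corollary is stated without a separate proof because it is immediate from Theorem \ref{Main1} (which already gives $X/g \cong \#_{k-1} S^2 \times S^1$) together with the observation that $X \to X/g$ is a $2$-fold cyclic branched covering since $g$ is an orientation-preserving involution. Your extra care about the fixed set being a $1$-manifold and about $k$ depending on the choice of $g$ is sound and consistent with Remark~\ref{rem1}.
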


 The following question was asked in \cite[Question 2]{MonConj}:

   \textit{ Is every closed,  orientable  $3$-manifold  a  $2$-fold  covering branched over a  $3$-sphere  with handles?}

For small covers, Corollary \ref{MQ} provides an affirmative answer to this question.
However, in general the answer is negative, since there exist $3$-manifolds on which every finite group action is trivial; see \cite{NoAct}. From Corollary \ref{MQ} we also obtain the following.

\begin{cor}[\cite{MonConj}] \label{HamiltonCycles}
     Let $X$ be an orientable $3$-dimensional  small cover such that  $H^{1}(X, \mathbb{Q})=0$. Then $X/g \cong S^3$ for any non-identity $g \in G$.
\end{cor}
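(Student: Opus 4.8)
The plan is to deduce the corollary from Theorem~\ref{Main1} together with an elementary computation in rational cohomology. By Theorem~\ref{Main1}, for any non-identity $g \in G$ the orbit space $X/g$ is homeomorphic to $\#_{k-1} S^2 \times S^1$, where $k$ is the number of connected components of the branching link $L \subset S^3$ of the double branched covering $X/g \to S^3$. Since $\#_{k-1} S^2 \times S^1$ is a $3$-sphere precisely when $k = 1$, and $\dim_{\Qo} H^1(\#_{k-1} S^2 \times S^1; \Qo) = k-1$, the corollary is equivalent to the implication: if $H^1(X;\Qo) = 0$, then $H^1(X/g;\Qo) = 0$.

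To prove this implication I would use the branched covering $\varphi_1 \colon X \to X/g$, which is the quotient map of the $\dt$-action on $X$ generated by $g$. Recall that $X$ is a closed PL $3$-manifold and, by the lemma proved above, $X/g$ is also a PL manifold. For a finite group acting on such a space, the rational cohomology of the orbit space is the invariant part of the rational cohomology of the total space: concretely, there is a transfer homomorphism $\operatorname{tr}\colon H^*(X;\Qo) \to H^*(X/g;\Qo)$ with $\operatorname{tr} \circ \varphi_1^* = 2 \cdot \id$, so that $\varphi_1^* \colon H^*(X/g;\Qo) \to H^*(X;\Qo)$ is injective with image $H^*(X;\Qo)^{\langle g \rangle}$. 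Crucially this holds even though the action is not free, which is exactly the branched situation. Applying it in degree one, $H^1(X/g;\Qo)$ embeds into $H^1(X;\Qo) = 0$ and hence vanishes, so $k = 1$ and $X/g \cong S^3$.

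The argument is essentially formal; the only input that is not bookkeeping is the rational transfer isomorphism for (possibly non-free) finite group quotients, which is classical. If one prefers to avoid quoting it, an alternative is to pass to homology: $H^1(X;\Qo) = 0$ gives $H_1(X;\Qo) = 0$ by universal coefficients, and one checks directly that $\varphi_{1*}\colon H_1(X;\Qo) \to H_1(X/g;\Qo)$ is surjective — any loop in $X/g$ may be homotoped off the codimension-two branch locus $L$, and then either it or its square lifts to a loop in $X$, which over $\Qo$ suffices. Either way $H_1(X/g;\Qo) = 0$, forcing $k = 1$. I do not expect a genuine obstacle here: the content of the corollary lies entirely in Theorem~\ref{Main1}, and what remains is only to observe that vanishing of $H^1(X;\Qo)$ propagates to the quotient.
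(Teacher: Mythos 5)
Your proof is correct and follows essentially the same route as the paper: invoke Theorem~\ref{Main1} to identify $X/g$ with $\#_{k-1}S^2\times S^1$, then use the rational transfer isomorphism $H^1(X/g;\Qo)\cong H^1(X;\Qo)^{\langle g\rangle}$ (valid for non-free finite actions) to deduce $k=1$ from $H^1(X;\Qo)=0$. The only cosmetic difference is that you argue directly while the paper phrases it as a proof by contradiction.
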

\begin{proof}
   Assume the converse. Then there exists $g \in G$ such that $X/g \cong \#_{k-1} S^2 \times S^1$ for $k > 1$. By the transfer theorem, see \cite[Chap. 3, Thm. 7.2]{Bredon}, we have 
    $$H^1(X/g, \mathbb{Q}) \cong H^{1}(X, \mathbb{Q})^{g}.$$ Hence, $H^1(X, \mathbb{Q}) \neq 0$ and we get a contradiction. 
\end{proof}

\begin{rem}
   Integral cohomology groups of small covers were calculated in \cite{CaiCho21} by L.\,Cai and S.\,Choi. In particular, $3$-dimensional small covers have at most 2-torsion in the cohomology groups. Note that an algorithm for detecting rational homology spheres among small covers was obtained in  \cite[Cor. 7.9]{RatSph}. 
\end{rem}
N.\,Yu.\,Erokhovets independently proved a more general statement.
\begin{thm}[{\cite[Prop. 12.13]{Er}}] \label{ErokhThm2}
    Let $X$ be an orientable $3$-dimensional small cover. Then $H^1(X, \mathbb{Q}) = 0$ if and only if $X/g \cong S^3$ for any non-identity $g \in G$.
\end{thm}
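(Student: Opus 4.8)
The plan is to combine the transfer theorem with the (very simple) rational representation theory of $G\cong\dt^2$, feeding in Theorem \ref{Main1} to compute the relevant invariant subspaces. Note first that one implication is already Corollary \ref{HamiltonCycles}, so the content is really the converse; but both directions fall out of a single betti-number computation, so I would prove them together. Set $V=H^1(X,\mathbb{Q})$ and regard it as a module over $\mathbb{Q}[G]$. By Remark \ref{GenPosSC} the group $G$ has order $4$ with three non-identity elements $g_1,g_2,g_3$, and since $\operatorname{char}\mathbb{Q}=0$, Maschke's theorem gives an isotypic decomposition $V=V_0\oplus V_1\oplus V_2\oplus V_3$ indexed by the four characters $\chi_0,\chi_1,\chi_2,\chi_3\colon G\to\{\pm1\}$, where $\chi_0$ is trivial and $\chi_i$ is the unique non-trivial character with $\ker\chi_i=\langle g_i\rangle$.

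Next I would apply the transfer isomorphism $H^\ast(Y/\Gamma,\mathbb{Q})\cong H^\ast(Y,\mathbb{Q})^\Gamma$ \cite[Chap.\,3, Thm.\,7.2]{Bredon} (already used in Corollary \ref{HamiltonCycles}) in two ways. Applied to the $G$-action on $X$, together with $X/G\cong S^3$ from Theorem \ref{OrbG}, it yields $V_0=V^G\cong H^1(S^3,\mathbb{Q})=0$. Applied to the $\langle g_i\rangle$-action on $X$, it yields $V^{g_i}\cong H^1(X/g_i,\mathbb{Q})$; by Theorem \ref{Main1} we have $X/g_i\cong\#_{k_i-1}S^2\times S^1$, where $k_i$ is the number of components of the branching link associated with $g_i$, so $\dim_{\mathbb{Q}}V^{g_i}=k_i-1$. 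On the other hand, from the character table of $\dt^2$ we have $\chi_0(g_i)=\chi_i(g_i)=1$ while $\chi_j(g_i)=-1$ for $j\neq i$, hence $V^{g_i}=V_0\oplus V_i=V_i$; combining, $\dim_{\mathbb{Q}}V_i=k_i-1$ for $i=1,2,3$.

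Summing over $i$ gives $\dim_{\mathbb{Q}}H^1(X,\mathbb{Q})=\sum_{i=1}^{3}(k_i-1)$. Since each branching link is non-empty — by Remark \ref{rem1} it is a union of disjoint cycles covering every vertex of the (non-empty) polytope $P$ — we have $k_i\geq1$ for all $i$, so the right-hand side vanishes if and only if $k_1=k_2=k_3=1$, i.e.\ if and only if $X/g\cong S^3$ for every non-identity $g\in G$; this is exactly the asserted equivalence (and it also re-derives Corollary \ref{HamiltonCycles} as the case where the left side is assumed to vanish). I do not expect a serious obstacle here: the topological inputs (Theorems \ref{OrbG} and \ref{Main1}, the transfer theorem) are all in place, and the only point needing care is the bookkeeping of which characters are trivial on which $g_i$, i.e.\ checking $V^{g_i}=V_i$ once $V_0=0$ is known — which is immediate from the structure of $\dt^2$.
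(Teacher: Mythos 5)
Your proof is correct and, notably, the paper itself supplies no proof of this theorem at all: it is cited verbatim as Erokhovets' result (\cite[Prop.~12.13]{Er}), with only the easy direction established in the paper as Corollary~\ref{HamiltonCycles}. So you are not reproducing the paper's argument but filling a gap it leaves to the literature. Your strategy — regarding $V=H^1(X,\mathbb{Q})$ as a $\mathbb{Q}[G]$-module for $G\cong\dt^2$, splitting it into the four character isotypic pieces $V_0\oplus V_1\oplus V_2\oplus V_3$, and then reading off $\dim V_0=\dim H^1(S^3,\mathbb{Q})=0$ and $\dim V_i=\dim H^1(X/g_i,\mathbb{Q})=k_i-1$ via the transfer isomorphism and Theorem~\ref{Main1} — is sound, and the bookkeeping $V^{g_i}=V_0\oplus V_i$ is exactly right for the Klein four-group. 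The conclusion $\dim H^1(X,\mathbb{Q})=\sum_{i=1}^{3}(k_i-1)$, with each $k_i\geq 1$ because the branching link is a non-empty union of cycles covering all vertices (Remark~\ref{rem1}), gives both implications at once and recovers Corollary~\ref{HamiltonCycles} as a by-product. Compared with the paper's treatment, you upgrade the single transfer application for one involution to a simultaneous decomposition over all of $G$; the cost is only elementary representation theory of $\dt^2$, and the gain is a genuine proof of the converse that the paper only cites. Minor stylistic point: Remark~\ref{GenPosSC} is invoked merely to know $|G|=4$, which is fine, though you could equally cite the description $G=\Ker\xi$ with $\xi$ surjective onto $\dt$.
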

Moreover, small covers as in Theorem \ref{ErokhThm2} correspond to triples of Hamiltonian cycles on a simple polytope $P$ such that any edge of $P$ belongs to exactly two cycles, see \cite[Thm. 11.7]{Er}. For further details see Section 13 in \cite{Er}.

\section{Small covers as 2-fold branched coverings of the sphere} \label{SphereCase}
In this section, we study orientable $3$-dimensional small covers $X = X(P,\lambda)$ for which the quotient $X/g$ is homeomorphic to the $3$-sphere $S^3$ for some non-identity element $g$ of the orientation-preserving subgroup $G \subset \dt^3$.

As shown in the previous section (Theorem~\ref{Main1}), the condition $X/g \cong S^3$ implies that the branching set of the map $X/g \to X/G$ is an unknot. This unknot determines a Hamiltonian cycle on the polytope $P$, which gives rise to a combinatorial description of the condition $X/g \cong S^3$,  which is made precise in Proposition~\ref{ErokhTh}.

We call a characteristic function $\lambda$ a \emph{regular $4$-coloring} if
\[
\im\lambda=\{\lambda_1,\lambda_2,\lambda_3,\lambda_1+\lambda_2+\lambda_3\}.
\]
By Theorem~\ref{orient}, any regular $4$-coloring defines an orientable small cover.
The next proposition is well known. 
\begin{prop}
Any Hamiltonian cycle in a $3$-polytope $P$ induces the regular $4$-coloring.
\end{prop}
\begin{proof}
Indeed, a Hamiltonian cycle separates $\partial P$ into two parts. It can be shown, for example see Construction $11.2$ in \cite{Er}, that the facets in each part can be colored in two colors such that adjacent facets have different colors. This gives us the required characteristic function $\lambda$.
\end{proof}
\begin{rem}
    There exist $4$-colorings that do not arise from a Hamiltonian cycle. For example, see \cite{Grinb}.
\end{rem}

Let $\lambda$ be a characteristic function, which arises from a Hamiltonian cycle.  In this case, we say that $\lambda$ is \textit{Hamiltonian}. Note that different cycles can give the same $\lambda$. For example, this is the case of Theorem \ref{ErokhThm2}. 

One direction of the next proposition was proved by A.\,Yu.\,Vesnin, A.\,D.\,Mednykh in \cite{VesnMed}, also see \cite[Theorem 4.1]{Vesn}. They proved this for bounded right-angled polytopes in $\mathbb{H}^3$, however, the same argument works for any simple $3$-polytope. The same result was obtained by N.\,Yu.\,Erokhovets by a different method in \cite[Theorem 11.5]{Er}.
\begin{prop} \label{ErokhTh}
        Let $X = X(P, \lambda)$ be an orientable $3$-dimensional small cover, let $G$ be the orientation-preserving subgroup of $\dt^3$. Then $\lambda$ is Hamiltonian if and only if $X/g \cong S^3$ for some non-identity $g \in G$.
\end{prop}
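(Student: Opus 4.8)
The plan is to prove both implications by tracking the branching set of the map $X/g \to S^3 \cong X/G$ through Construction \ref{constr} and Theorem \ref{Main1}. By Theorem \ref{Main1}, for any non-identity $g \in G$ the branching set $L_g \subset S^3$ is a trivial link, and by Remark \ref{rem1} its components are disjoint cycles on $\partial P$ covering every vertex of $P$; moreover $X/g \cong \#_{k-1} S^2 \times S^1$ where $k$ is the number of components of $L_g$. Hence $X/g \cong S^3$ if and only if $k = 1$, i.e. if and only if this collection of disjoint vertex-covering cycles is a single cycle — a Hamiltonian cycle on $P$. So the whole statement reduces to identifying, combinatorially, which edges of $P$ get "labeled by $g$" and checking when the unlabeled edges form a Hamiltonian cycle.

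First I would make the edge-labeling from Construction \ref{constr} explicit: to the edge $I = F_i \cap F_j$ we assign $\lambda_i + \lambda_j \in G$, and the circle $X_I \cong S^1$ sitting over $I$ is fixed pointwise by exactly this element of $G$ (and by no other non-identity element). Therefore, in the diagram of Theorem \ref{Main1}, the edge $I$ becomes an $S^1$-edge in $\Gamma'/g$ precisely when $\lambda_i + \lambda_j \neq g$, and an $I^1$-edge (collapsed interval) when $\lambda_i + \lambda_j = g$. The branching set $L_g$ is the union of the $S^1$-edges, i.e. of those edges $I = F_i \cap F_j$ with $\lambda_i + \lambda_j \neq g$; equivalently, $L_g$ consists of all edges \emph{not} labeled by $g$. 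So I must count components of the subgraph $\Gamma_g \subset \Gamma$ spanned by the unlabeled edges, and the claim is: $\Gamma_g$ is connected (a single cycle) for some $g$ iff $\lambda$ is Hamiltonian.

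For the "if" direction: suppose $\lambda$ arises from a Hamiltonian cycle $C$. By Proposition 4.1 (the one just above), $C$ separates $\partial P$ into two disks whose facets are $2$-colored on each side, and a short check of the standard coloring (Construction 11.2 of \cite{Er}) shows that the edges \emph{of} $C$ are exactly the edges $I = F_i \cap F_j$ with $F_i$, $F_j$ on opposite sides of $C$ — for these the two colors $\lambda_i, \lambda_j$ are "one from each side," and one computes $\lambda_i + \lambda_j$ equals a single fixed element $g_0 \in G$, independent of the edge. For edges \emph{not} on $C$ (chords, with $F_i, F_j$ on the same side), $\lambda_i + \lambda_j$ is a different, non-identity element of $G$. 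Hence with $g = g_0$ the set of unlabeled edges is precisely $C$ — a single cycle — so $k=1$ and $X/g \cong S^3$, with $X \to X/g$ the asserted double branched covering. The main work here is verifying that "crossing $C$" always produces the same $g_0$; this follows because along each of the two disks the coloring uses only two of the four colors and these two-color pairs on the two sides are complementary in a way dictated by the $(*)$-condition along $C$.

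For the "only if" direction: suppose $X/g \cong S^3$ for some non-identity $g$. Then $k=1$, so by Remark \ref{rem1} the unlabeled edges $\Gamma_g$ form a single cycle through all vertices of $P$ — a Hamiltonian cycle $C$. It remains to see that $\lambda$ is the coloring induced by $C$, and in particular that $\im\lambda$ is all four vectors (if $\im\lambda = \{\lambda_1,\lambda_2,\lambda_3\}$ one shows no choice of $g$ collapses the complement to a single cycle, because at every vertex the three incident edges carry all three distinct labels $\lambda_1+\lambda_2, \lambda_1+\lambda_3, \lambda_2+\lambda_3$, so exactly two are unlabeled at each vertex — consistent with a cycle — but a parity/count argument on the two sides of such a cycle forces the fourth color to appear, contradiction). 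Knowing $\im\lambda$ has four elements, the cycle $C$ separates $\partial P$ into two pieces; on each piece the edges are labeled by the two non-$g$ elements of $G$, and a connectedness argument shows each piece's facets are $2$-colored, which identifies $\lambda$ as the Hamiltonian coloring of $C$. I expect the main obstacle to be exactly this bookkeeping — proving rigorously that the unlabeled-edge subgraph is a single cycle \emph{forces} the $4$-coloring to be of Hamiltonian type, rather than merely being compatible with one — and the cleanest route is probably to invoke the already-cited combinatorial correspondence between Hamiltonian cycles and their induced colorings from \cite[Constr. 11.2]{Er} together with the component count from Theorem \ref{Main1}.
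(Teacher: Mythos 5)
Your overall plan --- reduce the statement to tracking which edges of $\Gamma$ are labelled by $g$, and identify the branching set of $X/g \to S^3$ with the unlabelled edges via Theorem \ref{Main1} --- is exactly the paper's approach, and in outline the reduction is correct.

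However, there is a persistent sign error in your label computations that breaks the ``if'' direction as written. First, the $S^1/I^1$ assignment in $\Gamma'/g$ is backwards: the circle $X_I$ is fixed pointwise by $g$ precisely when $\lambda_i + \lambda_j = g$, so a $g$-labelled edge gives an $S^1$-edge in $\Gamma'/g$, while an \emph{un}labelled edge gives an $I^1$-edge; and the ramification locus of $\varphi_2$ is the union of the $I^1$-edges (the arcs fixed by $G/g$), not the $S^1$-edges. Your final identification ``$L_g$ = edges not labelled by $g$'' is still correct, but only because two errors cancel. Second, and more seriously, in the ``if'' direction you claim that the edges of $C$ (those crossing between the two sides of the Hamiltonian cycle) all carry a single fixed label $g_0$, while the chords carry varied labels. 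That is the reverse of the truth: with the standard colouring $\{\lambda_1,\lambda_2\}$ on one side and $\{\lambda_3,\lambda_1+\lambda_2+\lambda_3\}$ on the other, every \emph{within-side} edge gets $\lambda_1+\lambda_2$, whereas the $C$-edges get $\lambda_1+\lambda_3$ or $\lambda_2+\lambda_3$ depending on the local colours. As written, your conclusion contradicts your premise: if the $C$-edges all had label $g_0$ and you set $g = g_0$, then the $C$-edges would be labelled \emph{by} $g$, and the unlabelled subgraph would be the chords, not $C$. The correct version is the paper's: the chords all get $\lambda_1+\lambda_2$; set $g = \lambda_1+\lambda_2$; then the unlabelled edges are exactly $C$. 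The same flip reappears in your ``only if'' paragraph (``on each piece the edges are labeled by the two non-$g$ elements'' --- in fact the within-piece edges are all labelled $g$, and that is precisely what forces each piece to be $2$-coloured by a coset of $\langle g\rangle$).
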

\begin{proof}
    Let $\lambda$ be a Hamiltonian characteristic function on $P$ and $X = X(P, \lambda)$ be the corresponding small cover. The corresponding Hamiltonian cycle divides $\partial P$ into two parts. Without loss of generality, we can assume that $\lambda_1, \lambda_2$ are the characteristic vectors for the facets in one part, and $\lambda_3,  \lambda_1 + \lambda_2 + \lambda_3$ are the characteristic vectors for the facets in another part. Let $g = \lambda_1 + \lambda_2$. We claim that $X/g \cong S^3$. Indeed, the Hamiltonian cycle corresponds to the edges which are \emph{not} labeled by $g$, see Construction  \ref{constr}. Hence this Hamiltonian cycle corresponds to the unknot in the branching set of the map $X/g \to X/G$, see the proof of Theorem \ref{Main1}. Hence $X/g \cong S^3$. The other direction of the statement follows from Theorem \ref{Main1}, see Remark \ref{rem1}. 
\end{proof}
\begin{rem}
    The involution $g$ from Proposition \ref{ErokhTh} is an example of a  \emph{hyperelliptic} involution in the sense \cite{VesnMed}, \cite{Er}.
\end{rem}
From Corollary \ref{HamiltonCycles} we get the following application to Hamiltonian cycles and regular 4-coloring of polytopes. 
\begin{cor}
    Let $\lambda$ be a regular $4$-coloring of a simple polytope $P$. Let $X = X(P, \lambda)$ be the corresponding orientable small cover. If $H^1(X, \mathbb{Q}) = 0$, i.e. if $X$ is a rational homology $3$-sphere, then the $4$-coloring arises from a Hamiltonian cycle. 
\end{cor}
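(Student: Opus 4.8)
The plan is to read this off almost immediately from the two preceding results, Corollary \ref{HamiltonCycles} and Proposition \ref{ErokhTh}, once the hypotheses have been unpacked. The first step is to record that a \emph{regular $4$-coloring} of $P$ is by definition a characteristic function $\lambda$ with $\im \lambda = \{\lambda_1, \lambda_2, \lambda_3, \lambda_1 + \lambda_2 + \lambda_3\}$ (the second alternative in Proposition \ref{colouring}). Applying Theorem \ref{orient} with the functional $\xi \in (\dt^3)^*$ determined by $\xi(\lambda_i) = 1$, and noting $\xi(\lambda_1 + \lambda_2 + \lambda_3) = 1$ in $\dt$, we see that $X = X(P,\lambda)$ is an orientable $3$-dimensional small cover. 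Hence the orientation-preserving subgroup $G \subset \dt^3$ is defined, and by Remark \ref{GenPosSC} it is the four-element group $\{(0,0,0),(1,1,0),(1,0,1),(0,1,1)\}$, which in particular contains non-identity elements.

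Next I would invoke Corollary \ref{HamiltonCycles}: since $X$ is an orientable $3$-dimensional small cover with $H^1(X,\mathbb{Q}) = 0$, we get $X/g \cong S^3$ for every non-identity $g \in G$. As $G \neq \{0\}$, there is at least one such $g$, and for it the branched covering $X \to X/g$ is, by Theorem \ref{Main1} and the discussion after it, a $2$-fold branched covering of $S^3$ along a link.

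Finally I would apply the "if" direction of Proposition \ref{ErokhTh}: the existence of a non-identity $g \in G$ for which $X \to X/g$ is the double branched covering of $S^3$ forces $\lambda$ to be a Hamiltonian characteristic function, that is, to arise from a Hamiltonian cycle on $P$. This is exactly the assertion of the corollary. (Alternatively, the statement is subsumed by Theorem \ref{ErokhThm2} together with the remark following it, which identifies the relevant $\lambda$ with triples of Hamiltonian cycles; but the route through Corollary \ref{HamiltonCycles} and Proposition \ref{ErokhTh} is the more elementary one given the machinery already set up.)

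There is no genuine obstacle in this argument — every step is a direct citation of an earlier result. The only point requiring a moment of care is the bookkeeping at the start: one must check that the hypothesis "$\lambda$ is a regular $4$-coloring" already guarantees that $X$ is orientable (so that $G$, Theorem \ref{OrbG}, and hence Corollary \ref{HamiltonCycles} are all applicable) and that $G$ is non-trivial, so that the phrase "for some non-identity $g \in G$" is not vacuous.
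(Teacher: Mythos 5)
Your proposal is correct and follows the exact same route the paper implicitly intends (the corollary is stated without explicit proof, immediately after the sentence ``From Corollary \ref{HamiltonCycles} we get the following application''): combine Corollary \ref{HamiltonCycles} to get $X/g \cong S^3$ with the ``if'' direction of Proposition \ref{ErokhTh} to conclude $\lambda$ is Hamiltonian. Your preliminary bookkeeping---verifying orientability via Theorem \ref{orient} and the non-triviality of $G$---is exactly the care the paper leaves to the reader.
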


For a topological space $X$ let $L(X)$ be the maximal integer $m$ such that $a_1a_2 \dots a_m \neq~0$, where $a_i \in H^{j}(X, \mathbb{Q})$ for $j \geq 1$. Then $L(X)$ is called \emph{rational cup-length} of $X$. 
\begin{cor} \label{length}
    Let $\lambda$ be a regular $4$-coloring of a simple polytope $P$ that arises from a Hamiltonian cycle.  Let $X = X(P, \lambda)$ be the corresponding orientable small cover. Then  $L(X) \leq 2$.  
\end{cor}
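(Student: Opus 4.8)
The plan is to reduce the bound to a statement about the triple cup product on $H^1(X,\mathbb{Q})$ and then exploit the hyperelliptic involution supplied by Proposition \ref{ErokhTh}. Since $X$ is a closed orientable $3$-manifold, $H^j(X,\mathbb{Q})=0$ for $j\notin\{0,1,2,3\}$. If $a_1a_2\cdots a_m\neq 0$ with each $a_i$ of degree $j_i\geqslant 1$, then this class lies in degree $\sum_i j_i$, which must therefore satisfy $m\leqslant\sum_i j_i\leqslant 3$; in particular $m\leqslant 3$, and when $m=3$ necessarily $j_1=j_2=j_3=1$. So everything comes down to showing that $a\cup b\cup c=0$ for all $a,b,c\in H^1(X,\mathbb{Q})$.

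Next I would bring in the involution. Because $\lambda$ is Hamiltonian, Proposition \ref{ErokhTh} gives a non-identity $g\in G$ with $X/g\cong S^3$, and the map $X\to X/g$ is a $2$-fold branched covering. Exactly as in the proof of Corollary \ref{HamiltonCycles}, the transfer isomorphism \cite[Chap.\,3, Thm.\,7.2]{Bredon} yields $H^1(X,\mathbb{Q})^{g}\cong H^1(X/g,\mathbb{Q})=H^1(S^3,\mathbb{Q})=0$. Since $g^*$ is an involution of $H^1(X,\mathbb{Q})$ and $\mathbb{Q}$ has characteristic $\neq 2$, it is diagonalizable with eigenvalues $\pm 1$, and its $(+1)$-eigenspace is precisely $H^1(X,\mathbb{Q})^{g}=0$; hence $g^*=-\id$ on $H^1(X,\mathbb{Q})$. (Alternatively, one observes directly that for $v\in H^1(X,\mathbb{Q})$ the class $v+g^*v$ is $g$-invariant, hence $0$.)

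Finally, $g$ lies in the orientation-preserving subgroup $G$ and $X$ is orientable, so $g^*$ acts as the identity on $H^3(X,\mathbb{Q})\cong\mathbb{Q}$. Then for $a,b,c\in H^1(X,\mathbb{Q})$,
\[
a\cup b\cup c=g^*(a\cup b\cup c)=(g^*a)\cup(g^*b)\cup(g^*c)=(-1)^3\,a\cup b\cup c=-(a\cup b\cup c),
\]
so $2(a\cup b\cup c)=0$ in $H^3(X,\mathbb{Q})$ and therefore $a\cup b\cup c=0$. Combined with the reduction in the first paragraph, this gives $L(X)\leqslant 2$. The only step requiring any care is the appeal to the transfer isomorphism — one must make sure it is applied to the order-$2$ quotient $X\to X/g$ with $\mathbb{Q}$-coefficients (so the group order is invertible), which is exactly the present setting; the rest of the argument is purely formal, and I do not anticipate a genuine obstacle.
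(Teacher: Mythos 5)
Your proof is correct, and it takes a genuinely different route from the paper's. The paper simply invokes the general inequality $L(X)\leqslant d\cdot L(Y)$ for a $d$-fold branched covering $X\to Y$ (Theorem 2.5 of Berstein--Edmonds, cited as \cite{degcov}) and applies it with $d=2$, $Y=S^3$. You instead give a self-contained argument: reduce to showing that triple cup products of degree-one classes vanish, then use the transfer isomorphism (exactly as in Corollary~\ref{HamiltonCycles}) to identify $H^1(X,\mathbb{Q})^g\cong H^1(S^3,\mathbb{Q})=0$, diagonalize the involution $g^*$ over $\mathbb{Q}$ to conclude $g^*=-\id$ on $H^1$, and use orientation-preservation to get $g^*=\id$ on $H^3$, yielding $a\cup b\cup c=-(a\cup b\cup c)=0$. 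The eigenvalue step is sound: over $\mathbb{Q}$, $g^*$ satisfies $(g^*)^2=\id$, so it is diagonalizable with eigenvalues $\pm1$, and the vanishing of the $(+1)$-eigenspace forces $g^*=-\id$. What your approach buys is that it avoids the external cup-length theorem entirely and exposes the actual mechanism in this special case; what the paper's route buys is brevity and generality (the Berstein--Edmonds bound applies to arbitrary degree and arbitrary base, not just an involution over $S^3$). Both proofs are valid.
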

\begin{proof}
    Indeed, for a $2$-fold branched covering $X \to Y$, we have $L(X) \leq 2L(Y)$ by Theorem $2.5$ in \cite{degcov}. For $Y = S^3$, we have $L(S^3)=1$, hence $L(X) \leq 2$.  
\end{proof}
Note that for an orientable manifold $X$, we have $L(X) \leq \dim X$.
Therefore, the inequality in Corollary \ref{length} can be reformulated as $L(X) \neq 3$.

\begin{rem}
The rational cohomology ring of a small cover $X$ was calculated in \cite{RatCoh} by S.\,Choi and H.\,Park. This provides us with a way to calculate the rational cup-length and, consequently, an approach to verify the necessary condition stated in Corollary \ref{length} for a regular $4$-coloring to arise from a Hamiltonian cycle on $P$.
\end{rem}
\section{The link description}
Let $X = X(P, \lambda)$ be an orientable $3$-dimensional small cover with Hamiltonian characteristic function $\lambda$. Then there exists $g \in G$ such that $X/g \cong S^3$ and the quotient map $X \to X/g$ is a double branched covering of the sphere branched along some link $L$.
 In this section, we describe the link $L$  from the combinatorial data  $(P, \lambda)$.  The results of this section are motivated by an example due to A.\,Yu.\,Vesnin and A.\,D.\,Mednykh \cite[Ex.~3.1]{VesnMed}.

We now introduce the main combinatorial construction used to describe the link $L$.

\begin{defin}
Let $P$ be a simple $3$-polytope with a Hamiltonian cycle $C$.
The \emph{bipartite chord diagram associated with $C$}, denoted by $D_C$, consists of the following data:
\begin{itemize}
\item the vertex set $V(C)$ equipped with the cyclic order induced by the cycle $C$;
\item for each edge of $P$ not contained in $C$, an arc connecting the corresponding pair of vertices of $C$, called a \emph{chord};
\item a partition of the set of chords into two disjoint subsets, induced by the two sides of the cycle $C$ in $\partial P$.
\end{itemize}
\end{defin}

For graphical purposes, we represent the diagram $D_C$ as follows.
Removing an arbitrary edge from the Hamiltonian cycle $C$ turns it into a Hamiltonian path
\[
H = \{\{1,2\}, \{2,3\}, \dots, \{2l-1,2l\}\},
\]
which induces a linear order on the vertex set $V(C)$.
The choice of the removed edge is inessential and serves only to fix a linear order;
different choices lead to different linear representations of the same bipartite chord diagram $D_C$ and to different diagrams of the same link.

Using such a linear representation, we place the vertices of $V(C)$ on a line, representing the Hamiltonian path $H$, in the given order, and draw each chord $(i,j)$ as an arc joining $i$ and $j$ on one side of the line. Chords from the two subsets are drawn at different levels, so that chords from one subset pass over chords from the other. 
We obtain the link diagram $L(D_C)$ by doubling each chord of $D_C$ across the line, that is, by adding its image under a $\pi$-rotation about the axis.
 As a result, each chord gives rise to a trivial knot component, and the over/under information is reversed on the two sides.  We illustrate this construction by an example. 

\begin{ex}
The figure below shows a Hamiltonian cycle on the $5$-prism $P$
together with the associated bipartite chord diagram $D_C$ and the corresponding link $L(D_C)$.
By Proposition~\ref{ErokhTh}, there exists a small cover
$X = X(P,\lambda)$ which is a $2$-fold branched covering
$X \to X/g$ with branching set $L$.
Theorem~\ref{Main2} implies that $L(D_C)$ is a diagram of the link $L$.

\begin{figure}[ht]
\centering

\begin{minipage}{0.35\linewidth}
\centering
\scalebox{1.7}{\includegraphics{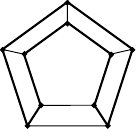}}
\caption*{The Hamiltonian cycle in $P$.}
\end{minipage}\hfill
\begin{minipage}{0.35\linewidth}
\centering
\scalebox{1.7}{\includegraphics{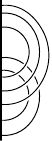}}
\caption*{The bipartite  chord diagram $D_C$.}
\end{minipage}\hfill
\begin{minipage}{0.28\linewidth}
\centering
\scalebox{1.7}{\includegraphics{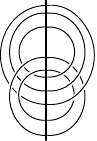}}
\caption*{The link $L(D_C)$.}
\end{minipage}

\end{figure}
\end{ex}

\begin{defin}
Two chords $(i_1,j_1)$ and $(i_2,j_2)$  are said to \emph{intersect} if exactly one of the vertices of chord $(i_2,j_2)$ lies between $i_1$ and $j_1$, that is, if
\[
i_1 < i_2 < j_1 < j_2 \quad \text{or} \quad i_2 < i_1 < j_2 < j_1.
\]
\end{defin}

Recall that an \emph{$n$-bridge decomposition} of a link $L$ is a decomposition $S^3 = B_1 \cup B_2$ into two $3$-balls such that the sphere
$E = B_1 \cap B_2$, called the \emph{bridge sphere}, intersects $L$ transversely and $L \cap B_i$ is a collection of $n$ trivial arcs for $i \in \{1,2\}$. The minimal such $n$ is called the \emph{bridge index} of $L$.
In the proof below, we construct an $l$-bridge decomposition of the link $L$.

\begin{thm} \label{Main2}
    The diagram $L(D_C)$ is a link diagram of the link $L$. 
\end{thm}
\begin{proof}
    Recall that $X/g \xrightarrow{\varphi_2} X/G$ is the double branched covering over the trivial knot, that is, the Hamiltonian cycle $C \subset \partial P$ in $X/G \cong  P \cup_{\partial P} P \cong S^3$. The link $L \subset X/g$ is the preimage, under the map $\varphi_2$, of all edges of $P$ that are not contained in $C$.  All these edges lie in the boundary of $P$. Moreover, the Hamiltonian cycle $C$  induces a partition of the edges into two classes, which we denote by $A$ and $B$.

Thus, topologically we have the $2$-sphere $S^2 \cong \partial P$ with a circle, that is, the Hamiltonian cycle $C$, which separates $S^2$ into two hemispheres $S^2 \cong D^2_A \cup_{C} D^2_B$ and two sets of arcs $ A = \{ a_1, \dots, a_p \} \subset D^2_A$ and $B = \{b_1, \dots, b_q\} \subset D^2_B$ with endpoints in $C$. The preimages of these hemispheres under the map $\varphi_2$ are spheres, which we denote by $S^2_A = \varphi^{-1}_2(D^2_A)$ and $S^2_B = \varphi^{-1}_2(D^2_B)$. Therefore, all components of $L$, which are preimages of the arcs, are trivial knots, since they lie in $2$-spheres. The intersection of $S^2_A$ and $S^2_B$ is the circle $\varphi_2^{-1}(C)$ and the order of endpoints of arcs, induced by $D_C$ and corresponding to vertices of $P$, does not change on the circle $\varphi_2^{-1}(C)$, since the restriction of $\varphi_{2}$ to  $\varphi_2^{-1}(C)$ is an orientation-preserving homeomorphism. Thus, we have $p$ trivial knot components in $S^2_A$ and $q$ trivial knot components in $S^2_B$.  The bipartite  chord diagram $D_C$ encodes the linking pattern of $L$: two components of $L$ are linked if and only if the corresponding chords in $D_C$ intersect.  Moreover, they are linked in the same way as in the Hopf link. 

Now consider a disk $D^2$ with boundary $\partial D^2 = C$ such that $D^2$ intersects the arcs only in $C$. The preimage $E=\varphi^{-1}(D^2)$ is a bridge sphere for the link $L$, which gives an $l$-bridge decomposition of  $L$, and after a small isotopy of $L$ the projection onto $E$ yields the diagram $L(D_C)$.
\end{proof}

\begin{rem}
The links constructed above are \emph{strongly invertible}, that is, there exists an orientation-preserving involution $h \in G/g$ such that
$h(K_i)=K_i$ and the restriction $h|_{K_i}$ has exactly two fixed points for each knot component $K_i \subset L$. In the proof above, the link $L$ is described via its quotient under this involution, which is encoded by the bipartite  chord diagram $D_C$. For strongly invertible knots, it is known that there is a bijective correspondence between knots equipped with such involutions and their quotients; see \cite{BBHL}. See also \cite[Sec.~3]{CMB}.
\end{rem}

\begin{cor}
    The bridge index of the link $L$ is $l$, where $2l$ is the number of  vertices of $P$. 
\end{cor}
\begin{proof}
    In the proof above we constructed an $l$-bridge decomposition of the link $L$. On the other hand, $L$ has $l$ components, and therefore the bridge index of $L$ is bounded below by $l$. 
\end{proof}
\begin{rem}
Further relations between these links and ideal right-angled $3$-polytopes were studied in a recent work of N.\,Yu.\,Erokhovets~\cite{Er3}.
\end{rem}

\section{Remarks and further directions}
In this section, we collect several remarks and problems related to the link
description constructed above, with an emphasis on rigidity questions. 
We begin with a remark related to a rigidity problem in toric topology.
\begin{rem} \label{BirmanProblem}
The following problem was posed by J.\,Birman in \cite[Problem 3.25]{Kirby}. Let $L$ be a link and $M_2(L)$ its $2$-fold branched covering. Let us say that $L_1$ and $L_2$ are \emph{$2$-equivalent} if and only if the corresponding manifolds $M_2(L_1)$ and $M_2(L_2)$ are homeomorphic. What is the description of a class of $2$-equivalent links? 
\end{rem}
Now let us recall some definitions from the theory of simple polytopes. For a detailed exposition we refer to \cite{BEMPP}. Recall that  a simple polytope $P$ is called a \emph{flag polytope} if every collection of pairwise intersecting facets of it has a non-empty intersection.  A \emph{$k$-belt} in a simple $3$-polytope is a cyclic sequence
$B_k = (F_{i_1}, \dots, F_{i_k})$ of $k > 3$ facets in which pairs of consecutive facets (including $F_{i_k}$ and $F_{i_1}$) are adjacent, other pairs of facets are disjoint, and no three facets have
a common vertex.

In the case of small covers with Hamiltonian characteristic functions we have the following result related to Remark \ref{BirmanProblem}.
\begin{prop}
     Let $P_1$ be a Pogorelov polytope, i.e. it is flag and does not have $4$-belts. Let $X_1 = X(P_1, \lambda_1)$, $X_2 = X(P_2, \lambda_2)$ be the small covers with Hamiltonian characteristic functions $\lambda_1, \lambda_2$. Let $L_1, L_2$ be the corresponding links. Then $L_1$ and $L_2$ are $2$-equivalent if and only if   $L_1$ and $L_2$ are equivalent. 
\end{prop}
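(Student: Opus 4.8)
The plan is to combine the link-theoretic description from Theorem \ref{Main2I} with the cohomological rigidity of Pogorelov polytopes and their small covers. The "if" direction is trivial: equivalent links are $2$-equivalent (indeed, the $2$-fold branched coverings are homeomorphic, in fact PL-homeomorphic). So the whole content is the "only if" direction: assuming $M_2(L_1) \cong M_2(L_2)$, deduce that $L_1$ and $L_2$ are equivalent links.

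First I would identify $M_2(L_i)$. By Proposition \ref{ErokhTh} the small cover $X_i = X(P_i, \lambda_i)$ is the $2$-fold branched covering of $S^3$ along $L_i$, so $M_2(L_i) = X_i$ by the uniqueness Theorem \ref{unq}. Hence the hypothesis says exactly $X_1 \cong X_2$ as manifolds. Now the key input: by \cite{BEMPP} (cohomological rigidity for small covers over Pogorelov polytopes), a homeomorphism $X_1 \cong X_2$ forces $P_1 \cong P_2$ as combinatorial polytopes and, up to this identification and an automorphism of $\dt^3$, forces $\lambda_1$ and $\lambda_2$ to coincide; one should also note here that since $P_1$ is Pogorelov and $X_1 \cong X_2$, the polytope $P_2$ is Pogorelov as well, so the rigidity theorem applies symmetrically. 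Thus the pairs $(P_1, \lambda_1)$ and $(P_2, \lambda_2)$ are equivalent as combinatorial-characteristic data.

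Next I would promote this equivalence of combinatorial data to an equivalence of the links. Since $L_i$ is built from $(P_i, \lambda_i)$ by Construction \ref{constr} together with the choice of the Hamiltonian cycle $C_i$ consisting of the edges not labeled by $g_i$ (equivalently, the edges $F_a \cap F_b$ with $\lambda_a + \lambda_b \neq g_i$), a combinatorial isomorphism $P_1 \to P_2$ carrying $\lambda_1$ to $\lambda_2$ carries $C_1$ to $C_2$ and hence the chord diagram $D_{C_1}$ to $D_{C_2}$, and therefore the intersection graph $G_{C_1}$ to $G_{C_2}$. By Theorem \ref{Main2I} the link $L_i$ is (up to isotopy) determined by $G_{C_i}$ as the associated chainmail link, so $L_1$ and $L_2$ are equivalent links. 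One subtlety to address: the hyperelliptic involution $g_i \in G$, and the decomposition of $\partial P_i$ into the two colored regions, may a priori not be canonical; but the Hamiltonian cycle $C_i$ is recovered intrinsically from $\lambda_i$ as the set of edges whose two facet-colors sum to $g_i$, and when several Hamiltonian cycles yield the same $\lambda_i$ (as in Theorem \ref{ErokhThm2}) the rigidity identification still carries the distinguished cycle — the one cut out by $g_i$ — to the corresponding one, so no ambiguity survives. (If one wants to be safe, restrict attention to the case where $\lambda_i$ determines $C_i$ uniquely, which holds precisely when $X_i$ is not a rational homology sphere in the sense of Theorem \ref{ErokhThm2}, and handle that exceptional case separately by observing all relevant cycles are interchanged by the rigidity.)

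The main obstacle is the precise invocation of cohomological rigidity: \cite{BEMPP} proves that small covers over Pogorelov polytopes are cohomologically rigid, but one must make sure the statement actually delivers the equivalence of the pair $(P,\lambda)$, not merely of the polytope — i.e., that a homeomorphism $X_1\cong X_2$ induces an isomorphism of polytopes under which the characteristic functions agree up to a $\mathrm{GL}(3,\dt_2)$-change of basis. This is the crux; once it is in hand, the passage through $C_i$, $D_{C_i}$, $G_{C_i}$ to the link $L_i$ via Theorem \ref{Main2I} is purely formal, and the converse direction is immediate.
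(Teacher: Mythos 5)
Your argument is essentially the paper's: the ``if'' direction is Theorem \ref{unq}, and the ``only if'' direction identifies $M_2(L_i) = X_i$, applies cohomological rigidity for Pogorelov polytopes \cite[Thm.~5.4]{BEMPP} to pass from $X_1 \cong X_2$ to an equivalence of characteristic pairs $(P_1,\lambda_1) \cong (P_2,\lambda_2)$, and then concludes that equivalence of pairs gives equivalence of links. That is exactly the paper's route.

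Where you introduce a genuine gap is in the final step, where you pass to the abstract intersection graph $G_{C_i}$ and then invoke Theorem \ref{Main2I} to claim $L_i$ is ``(up to isotopy) determined by $G_{C_i}$.'' Theorem \ref{Main2I} only records which pairs of components are linked and that each linking number is $\pm 1$; pairwise linking data does not determine a link up to isotopy (already for three components one can have Borromean-type phenomena), and an abstract intersection graph carries less information than a planar embedding of a chord diagram. The paper's proof avoids this entirely with a more direct observation: the link is \emph{constructed} inside $X/G \cong S^3$ (two copies of $P$ glued along $\partial P$) as the preimage of the specific off-cycle edges of $P$, so an isomorphism of pairs $(P_1,\lambda_1,g_1) \cong (P_2,\lambda_2,g_2)$ literally transports $L_1$ to $L_2$ without any detour through $G_C$. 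You could repair your argument by stopping at the equivalence of the embedded data rather than retreating to the combinatorial invariant $G_C$. Separately, you rightly flag that the rigidity isomorphism carries $G_1$ to $G_2$ but need not carry $g_1$ to $g_2$; the paper does not address this either, so this is a point in your favor, even if neither write-up fully discharges it.
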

\begin{proof}
    Indeed, if the two links are equivalent, then they are $2$-equivalent by Theorem \ref{unq}. On the other hand, $X_1$ is homeomorphic to $X_2$ if and only if the characteristic pairs $(P_1, \lambda_1)$ and $(P_2, \lambda_2)$ are equivalent, as follows from \cite[Thm. 5.6]{BEMPP}. However, the link $L$ is completely determined by the pair $(P, \lambda)$, hence $L_1$ is equivalent to $L_2$. 
\end{proof}
\begin{rem}
    Note that if $P$ is a Pogorelov polytope, then the small cover $X(P,\lambda)$ is a hyperbolic $3$-manifold of L\"{o}bell type, see \cite[Thm. 2.15]{BEMPP}.
\end{rem}
 
For general polytopes, however, the previous proposition is not true, as illustrated by the following example.

\begin{ex} \label{trunsimpl}
  Let $P_i$, where $i = 1, 2, 3$, be a polytope obtained from  a threefold vertex truncation (\cite[Construction 1.1.12]{BP}) of the simplex $\Delta^3$. Let $X_i = X(P_i, \lambda_i)$ be the corresponding small cover over $P_i$, see the appendix for the precise definition of the pairs $(P_i, \lambda_i)$. By Theorem \ref{Main1}, we have $X_i/g \cong S^3$ for any nontrivial $g \in G$.
By Theorem \ref{Main2}, the branching sets of the coverings
$X_i \to X_i/g$ are described, in terms of the links shown below, as follows:

\begin{enumerate}
    \item For $X_1$, the branching set is the same for all nontrivial $g \in G$,
    namely, the link $L_1$.

    \item For $X_2$, the branching set corresponding to $g=\lambda_1+\lambda_2$
    is the link $L_2$, while for all other $g \in G$ the branching set is the link $L_3$.

    \item For $X_3$, the branching set corresponding to $g=\lambda_1+\lambda_3$
    is the link $L_3$, while for all other $g \in G$ the branching set is the link $L_1$.
\end{enumerate}

  Thus, for example, for $X_2$ we have two different, but $2$-equivalent links. All $X_i$ are $\dt$-equivariantly homeomorphic by Theorem \ref{unq}.
\begin{figure}[h]
\begin{minipage}[h]{0.32\linewidth}  
\center{\scalebox{1.7}{\includegraphics{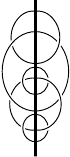}}}
\caption*{The link $L_1$}  
\end{minipage}
\hfill
\begin{minipage}[h]{0.32\linewidth} 
\center{\scalebox{1.7}{\includegraphics{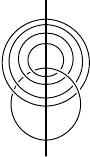}}}
\caption*{The link $L_2$} 
\end{minipage}
\hfill
\begin{minipage}[h]{0.32\linewidth} 
\center{\scalebox{1.7}{\includegraphics{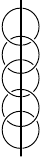}}}
\caption*{The link $L_3$} 
\end{minipage}
\end{figure}

\end{ex}

More generally, by Theorem~\ref{ErokhThm2}, any orientable $3$-dimensional small cover $X$ that is a rational homology sphere admits three involutions $g_i \in G$, $i\in\{1,2,3\}$. Each involution $g_i$ yields a link $L_i \subset X/g_i \cong S^3$. These links are not necessarily pairwise distinct.

\begin{probl}
Do these links $L_i$ determine a small cover $X$ up to a $\dt^3$-equivariant homeomorphism? More generally, what can be said about rigidity problems in the case of a Hamiltonian characteristic function?  
\end{probl}
Let $X_i = X_i(P_i, \lambda_i)$, where $i \in \{1, 2\}$,  be small covers  with Hamiltonian characteristic functions. Let $g_i$ be an involution such that $X_i/g_i \cong S^3$ and let $\dt = <g_i>$. By Theorem \ref{DJeqh} and Theorem \ref{unq} the small covers $X_i$ are
\begin{itemize}
    \item $\dt^3$-equivariantly homeomorphic  if and only if the pairs $(P_i, \lambda_i)$ are equivalent. 
    \item $\dt$-equivariantly homeomorphic if and only if the links $L_i$ are equivalent. 
\end{itemize}
In particular, link invariants give us invariants for $\dt$-equivariant homeomorphisms. For example, in the flag case we have the following invariant.

\begin{rem}
To a link $L$ we can associate a $\pi$-orbifold group $O(L)$, see \cite{orb}. 
A link $L$ is \emph{sufficiently complicated}, if the $2$-fold branched covering $M_2(L)$ is an aspherical space, see \cite[Prop. 1.1]{orb} for another definition.  On the other hand, a small cover $X(P, \lambda)$ is an aspherical space if and only if $P$ is a flag polytope, see \cite[Thm. 2.2.5]{DJS}. It was proved in \cite[Thm. 1]{orb} that two sufficiently complicated links are equivalent if and only if their $\pi$-orbifold groups are isomorphic.   
\end{rem}
Recall that an orientable $3$-manifold (or a link) is called \emph{prime} if it cannot be decomposed into a nontrivial connected sum of two manifolds (links). Every orientable $3$-manifold can be uniquely decomposed into the connected sum of prime manifolds. The same applies to links. The explicit prime decomposition of small covers was obtained in \cite{Er2}. The following problem arose from discussions with N.\,Yu.\,Erokhovets.
\begin{probl}
    Let $X = X(P,\lambda)$ be a small cover with Hamiltonian characteristic function. It follows from  Proposition \ref{ErokhTh} that $X$ is a $2$-fold branched covering of a link $L$, described in Section~5. What is the explicit prime decomposition of $L$? 
\end{probl}
In particular, we have the following result, also see \cite{dec}.
\begin{prop}[\cite{Unlink}]
    Suppose that $X$ is a $2$-fold branched covering of $S^3$ along a link $L$, and that $X$ contains no $S^1 \times S^2$ summands. If $X$ splits as connected sum $X = \#^n_{i = 1 } X_i$, there is a corresponding splitting of $L$ as $\#^n_{i = 1 } L_i$ with $X_i$ the $2$-fold branched covering of $L_i$.
\end{prop}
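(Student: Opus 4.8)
The plan is to follow \cite{Unlink} and realise the connected-sum structure of $X$ by a $\tau$-invariant $2$-sphere, where $\tau$ denotes the covering involution, $\pi\colon X\to S^3$ the quotient map, and $\widetilde L=\mathrm{Fix}(\tau)$, so that $\pi|_{\widetilde L}\colon\widetilde L\to L$ is a homeomorphism. First I would reduce to the prime case: by uniqueness of the Kneser--Milnor decomposition it suffices to show that whenever $X=M_2(L)$ is reducible and has no $S^1\times S^2$ summand, there is a $\tau$-invariant $2$-sphere realising a nontrivial splitting $X=M_2(L')\,\#\,M_2(L'')$ with $L=L'\,\#\,L''$. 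Iterating this over the summands --- the process terminating because the number of prime summands is finite --- produces a splitting $X=\#_jM_2(\widetilde L_j)$ with each $M_2(\widetilde L_j)$ prime, and one then matches it with the prescribed $X=\#_{i=1}^nX_i$ by grouping the indices $j$ into sets $S_i$ according to the prime decomposition of the $X_i$ and setting $L_i=\#_{j\in S_i}\widetilde L_j$.

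For the key step, since $X$ is reducible I would invoke the equivariant sphere theorem for finite group actions --- the equivariant form of the Kneser conjecture for involutions --- to obtain a $\tau$-invariant embedded $2$-sphere $S\subset X$ that does not bound a ball, and then analyse its position relative to $\tau$. If $S$ were nonseparating, $X$ would have an $S^1\times S^2$ summand, so $S$ separates. If $\tau(S)\cap S=\varnothing$, then $S\cap\widetilde L=\varnothing$ and $\pi(S)$ is an embedded $2$-sphere in $S^3$ disjoint from $L$; by irreducibility of $S^3$ it bounds a ball $B$, and either $B$ or its complement misses $L$ --- whence $\pi^{-1}$ of that ball is a disjoint union of two balls and $S$ bounds a ball, a contradiction --- or both complementary balls meet $L$, whence $X$ is a union of two connected $3$-manifolds with boundary $S^2\sqcup S^2$ glued along all four boundary spheres, which again forces an $S^1\times S^2$ summand. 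Hence $\tau(S)=S$. Finally, $\tau|_S$ is not free, because its quotient $S/\tau\cong\RP^2$ does not embed in $S^3$; having a fixed point, $\tau|_S$ is orientation-preserving near it and hence on all of $S$, and after an equivariant isotopy making $S$ transverse to $\widetilde L$ the Lefschetz fixed-point theorem --- an orientation-preserving involution of $S^2$ has Lefschetz number $2$ --- shows that $S\cap\widetilde L$ consists of exactly two points.

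Then $\pi(S)$ is an embedded $2$-sphere in $S^3$ meeting $L$ transversally in exactly two points and separating $S^3$; since each component of $L$ meets $\pi(S)$ in an even number of points, these two points lie on one and the same component, so $\pi(S)$ realises a connected sum $L=L'\,\#\,L''$. Capping with balls the two pieces of $X$ obtained by cutting along $S$ yields $X=M_2(L')\,\#\,M_2(L'')$, and this splitting is nontrivial: if, say, $M_2(L')\cong S^3$, then by Alexander's theorem the corresponding piece of $X$ would be a ball, so $S$ would bound a ball, contrary to its choice. This is the required inductive step; the remaining bookkeeping --- finiteness of the iteration via Kneser--Milnor and the matching with $\#_{i=1}^nX_i$ by uniqueness of the prime decomposition of $3$-manifolds --- is routine.

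The hard part is the first input --- the equivariant sphere theorem --- which is what guarantees that the purely topological connected-sum structure of $X$ can be carried by a $\tau$-invariant surface. Everything afterward is the elementary, if somewhat lengthy, case analysis showing that such a sphere meets the branch locus in exactly two points precisely because $S^1\times S^2$ summands have been excluded.
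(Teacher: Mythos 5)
The paper does not prove this proposition; it simply quotes it from Plotnick \cite{Unlink}, whose argument rests on Kim--Tollefson \cite{dec}. Your reconstruction follows the same general strategy (equivariant sphere theorem, then analysis of a $\tau$-invariant essential sphere), so the route is the right one, and most of the case analysis is sound.

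There is, however, a genuine gap at the step where you conclude that $\tau|_S$ is orientation-preserving. You argue that $\tau|_S$ is not free (else $S/\tau\cong\RP^2$ would embed in $S^3$), hence has a fixed point, and that $\tau|_S$ ``is orientation-preserving near it and hence on all of $S$.'' This last assertion is unjustified and false in general: an involution of $S^2$ with nonempty fixed set may be a \emph{reflection}, with a whole circle $C$ of fixed points, and such an involution is orientation-\emph{reversing}. Your claim implicitly assumes the fixed point is isolated, i.e.\ that $S$ already meets $\widetilde L$ transversally there --- but that is exactly what needs to be established, and the subsequent equivariant-isotopy-to-transversality step cannot be invoked to justify it, since along $C\subset\mathrm{Fix}(\tau)$ an equivariant isotopy cannot simply push $S$ off $\widetilde L$. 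The reflection case must be excluded by a separate argument, and in fact it is the hypothesis that $X$ has no $S^1\times S^2$ summand which does it: if $\mathrm{Fix}(\tau|_S)=C$ were a circle, then $\pi(S)$ would be an embedded disk $D\subset S^3$ with $\partial D=\pi(C)$ a full component of $L$ and $\mathrm{int}(D)\cap L=\varnothing$ (the latter since $\pi$ is one-to-one over $L$ and $S\cap\widetilde L=C$); the boundary of a small regular neighbourhood of $D$ is then a $2$-sphere splitting $L$ nontrivially ($L$ is not just that one unknotted component, since $X$ is assumed reducible), and a split link produces an $S^1\times S^2$ summand by exactly the argument you give in the $\tau(S)\cap S=\varnothing$ case --- a contradiction. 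Once the reflection case is eliminated, $\mathrm{Fix}(\tau|_S)$ is a finite set of isolated fixed points, $\tau|_S$ is a $\pi$-rotation, and your Lefschetz count of two points goes through.
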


\section*{Acknowledgements}
The author is grateful to the anonymous referee for helpful comments that significantly improved the paper. The author would like to thank his advisor Matthias Franz for his attention to this work. The author also thanks N.\,Bogachev, N.\,Yu.\,Erokhovets, V.\,Shastin, and F.\,Vylegzhanin for useful discussions and comments. 
\clearpage
\appendix
\section*{Appendix}
In this section we define pairs $(P_i, \lambda_i)$, where $i \in \{1, 2, 3\}$, for Example \ref{trunsimpl}. See figures below. 
\begin{figure}[h]
\begin{minipage}[h]{0.49\linewidth}  
\center{\scalebox{1.5}{\includegraphics{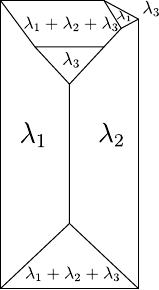}}}
\caption*{The  pair $(P_1, \lambda_1)$}  
\end{minipage}
\hfill
\begin{minipage}[h]{0.49\linewidth} 
\center{\scalebox{1.5}{\includegraphics{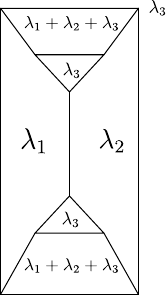}}}
\caption*{The  pair $(P_2, \lambda_2)$} 

\end{minipage}
\hfill
\begin{minipage}[h]{0.49\linewidth}

    \centering
   \scalebox{1.5}{\includegraphics[width=0.35\linewidth]{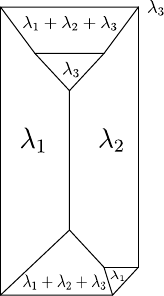}}
    \caption*{The  pair $(P_3, \lambda_3)$}

\end{minipage}

\end{figure}

\newpage

\end{document}